\theoremstyle{plain}      \newtheorem{lem}{Lemma}[section]
                          \newtheorem{thm}[lem]{Theorem}
                          \newtheorem{prop}[lem]{Proposition}
                          \newtheorem{cor}[lem]{Corollary}
                           \newtheorem{rem}[lem]{Remark}
                           \newtheorem{exa}[lem]{Example}
\theoremstyle{definition} \newtheorem{defin}{Definition}[section]
\date{}
\numberwithin{equation}{section}
\newcommand{\N}{\mathbb N}
\newcommand{\R}{\mathbb R}
\newcommand{\C}{\mathbb C}
\newcommand{\E}{\mathfrak E}
\renewcommand{\c}{\mathfrak c}
\newcommand\Spec{\textup{Spec}}
\newcommand\vol{\textup{vol}}
\newcommand\dist{\textup{dist}}
\renewcommand\div{\textup{div}}
\newcommand\supp{\textup{supp}}
\newcommand{\indic}{\operatorname{1\negthinspace l}}
\def\<{\langle}
\def\>{\rangle}
\begin{document}

\title{Gibbs/Metropolis algorithms on a convex polytope}

\author{\textsc{Persi Diaconis}\footnote{Supported in part by NSF grant 0804324}\and%
          \textsc{Gilles Lebeau}\footnote{Supported in part by ANR-06-BLAN-0250-03}\ %
\footnote{Corresponding author: Parc Valrose 06108 Nice Cedex 02, France; \texttt{lebeau@unice.fr}}\and%
\textsc{Laurent Michel}\footnote{Supported in part by ANR-06-BLAN-0250-03 and ANR-09-JCJC-0099-01}\and\\\textit{Departments of
      Mathematics and Statistics,}\\\textit{Stanford University}\\ and\\\textit{D\'epartement de
              Math\'ematiques,}\\\textit{Universit\'e de Nice Sophia-Antipolis}}

\maketitle

\begin{abstract}
  This paper gives sharp rates of convergence for natural versions of
  the Metropolis algorithm for sampling from the uniform distribution
  on a convex polytope. The singular proposal distribution, based on a
  walk moving locally in one of a fixed, finite set of directions,
  needs some new tools. We get useful bounds on the spectrum and
  eigenfunctions using Nash and Weyl-type inequalities. The top
  eigenvalues of the Markov chain are closely related to the Neuman
  eigenvalues of the polytope for a novel Laplacian.
\end{abstract}

\section{Introduction}\label{sec1}
\subsection{Overview}\label{sec11}

The Metropolis algorithm and the Gibbs sampler (also known as Glauber
dynamics) are often used together as one of the basic tools of
scientific computation. We treat the following example: let $\Omega$
be a polyhedral convex set in $d$ dimensions. To sample from the
uniform distribution on $\Omega$, from a point $x$ in $\Omega$, pick a
direction $e$ from a fixed finite collection. Set $y=x+ue$ where $u$
is chosen uniformly in $[-h,h]$. If $y\in\Omega$, move to $y$. Else,
stay at $x$. Under a mild generality condition on the set of
directions in relation to $\Omega$, this Markov chain converges to the
uniform distribution on $\Omega$. Our main result gives a sharp
determination of the exponential rate of convergence of this
algorithm. It is $ce^{-ng(h)}$ with $g(h)$ asymptotic to $h^2\nu$ for
$\nu$ the first non zero  eigenvalue of a novel Laplacian defined on $\Omega$ with Neumann condition on the boundary.

Sampling from a convex set is a practical problem. For example,
choosing a uniformly distributed $100\times100$ doubly stochastic
matrix \cite{pd178} or a uniformly distributed $100\times100$
tri-diagonal doubly stochastic matrix \cite{pd179}. It is also a basic
problem of study in theoretical computer science
\cite{lovasz93,lovasz03}. Many algorithms have been proposed and
studied. A readable textbook description of the Gibbs sampler is in
Liu \cite{liu}. See \cite{pd56} for a review of rigorous results for
the Metropolis algorithm in finite spaces. The popular \textit{hit and
  run algorithm} \cite{smith,pd168,lovasz07} was introduced for this
purpose. Hit and run makes long moves and will probably be preferred
in practice to the local algorithms studied here.

Spectral techniques for analysis of the Metropolis algorithm on
continuous spaces are developed in \cite{pd165,lebeau,pd167}. The
proposal distributions there are ``ball walks'' choosing from the
uniform distribution on the interior of a ball. The discrete set of
directions studied here is widely used in practice and necessitates
new ideas. Present problems can also be studied by Harris recurrence
techniques \cite{meyn,rosen} and by the path techniques of Yuen
\cite{yuen}. These give useful results but do not get the sharp rates
on the exponents derived here.

The remainder of this section gives a careful description of the
Markov chain and the geometric connection between the underlying
directions and the convex set $\Omega$ required for ergodicity.
\ref{sec2} gives bounds on the spectrum and eigenvectors using Nash
inequalities and Weyl-type inequalities. \ref{sec3} uses this spectral
information to get rates of convergence. \ref{sec4} proves that our
operator (suitably rescaled) converges, in the strong resolvent sense,
to a novel Laplace operator on $\Omega$ with Neumann boundary
conditions. A similar convergence of the ball walk Metropolis operator
to the usual Neumann Laplacian is a key ingredient of
\cite{pd167,lebeau}. The final section shows how to modify the
argument to handle a continuous choice of direction.

\subsection{Basic definitions}\label{sec12}

Let $\Omega$ be an open convex polytope in $\R^d$, $d\geq 2$. Thus there
exists linear forms $\ell_j:\R^d\to \R,\ j=1,\dots,m$ and real numbers
$b_j$ such that
\begin{equation}
\Omega=\left\{x\in\R^d,\;\forall j=1,\dots,m,\;\ell_j(x)>b_j\right\}
\label{11}
\end{equation}
Assume also that $\Omega$ is bounded and non empty.

Consider $\E=\{e_1,\dots,e_p\}$ a family of vectors in $\R^d$.  For
any $j\in\{1,\dots,p\}$ we introduce the operator acting on continuous
functions $M_{j,h}f(x)=m_{j,h}(x)f(x)+K_{j,h}f(x)$, where
\begin{equation}
K_{j,h}(f)(x)=\dfrac12\int_{t\in[-1,1]}1_\Omega(x+hte_j)f(x+hte_j)\,dt
\label{12}
\end{equation}
and $m_{j,h}(x)=1-K_{j,h}(1)(x)$.

The local Metropolis operator associated to the family $\E$ is
\begin{equation}
M_h(f)(x)=\dfrac1{p}\sum_{j=1}^p\left(K_{j,h}f(x)+m_{j,h}(x)f(x)\right).
\label{13}
\end{equation}
In the sequel, denote $m_h=\tfrac1{p}\sum_{j=1}^pm_{j,h}$ and
$K_h=\tfrac1{p}\sum_{j=1}^pK_{j,h}$. Let $M_h(x,dy)$ be the Markov
kernel associated to this operator. This defines a bounded
self-adjoint operator on $L^2(\Omega)$. Moreover, since $M_h(1)=1$,
$\parallel M_h\parallel_{L^2\to L^2}=1$. Thus the probability measure
$\frac{dx}{\vol(\Omega)}$ on $\Omega$ is stationary. For $n\geq1$,
denote by $M_{h}^n(x,dy)$ the kernel of the iterated operator
$(M_{h})^n$. For any $x\in \Omega$, $M_{h}^n(x,dy)$ is a probability
measure on $\Omega$, and our main goal is to get some estimates on the
rate of convergence, when $n\to+\infty$, of the probability
$M_{h}^n(x,dy)$ toward the stationary probability
$\frac{dy}{\vol(\Omega)}$.

A good example to keep in mind is the case where $\Omega=A_N$ is the
set of $N\times N$ doubly stochastic matrices. In other words,
\begin{equation}
A_N=\left\{(a_{i,j})_{1\leq i,j\leq N},\;\forall i,j,\;a_{i,j}>0,\;\sum_{k}a_{ik}=\sum_{k}a_{kj}=1\right\}.
\label{14}
\end{equation}
The set $A_N$ can be viewed as convex open polytope in 
$A_N^0=\{(a_{i,j})_{1\leq i,j\leq
  N},\;\sum_{k}a_{ik}=\sum_{k}a_{kj}=1\}$. A good way to sample from
$A_N$ is to use the Metropolis strategy in the following manner.
Starting from a matrix $A\in A_N$ choose two distinct rows $R_{i_1},R_{i_2}$
and two distinct columns $C_{j_1}, C_{j_2}$ at random. Denote
$\vec{i}=(i_1,i_2,j_1,j_2)$ and $F=F(\vec{i})$ the matrix such that
$F_{i,j}=\delta_{i_1 j_1}-\delta_{i_1,j_2}-\delta_{i_2
  j_1}+\delta_{i_2 j_2}$. For $h>0$ given, build the family of
matrices $(\tilde A(t)=A+tF(\vec{i}))_{t\in[-h,h]}$. For any $t\in \R$
the matrix $\tilde A(t)$ belongs to the set $A_N^0$. Taking
$t\in[-h,h]$ at random and keeping the move $A\to \tilde A(t)$ only if
it results in an element of $A_N$, we are exactly in the above
situation with $\E=\{F(\vec{i})\}$. This
algorithm is used in \cite{pd178} to study things like the
distribution of typical entries or the eigenvalues of random doubly
stochastic matrices.

Let us go back to the general problem. From the definition of
$\Omega$, a point $x\in\R^d$ belongs to $\partial\Omega$ iff there
exists a partition $I\cup J=\{1,\dots,m\}$ such that $I\neq\emptyset$ and 
\begin{equation}
\forall i\in I,\;\ell_i(x)=b_i\quad\text{and}\quad\forall j\in J,\;\ell_j(x)>b_j.
\label{15}
\end{equation}
Define the following function $\c:\R^d\to\N\cup\{+\infty\}$ by
\begin{alignat}{3}
\c(x)&=0\quad&\text{if }x&\in\Omega\notag\\
&=+\infty\quad&\text{if }x&\in\R^d\setminus\overline\Omega\label{16}\\
&=\text{card}(I)\quad&\text{if } x&\in\partial\Omega.\notag
\end{alignat}
To proceed, the following geometric condition is needed; it shows how
the generating set $\E$ must be related to the convex set $\Omega$.
Proposition \ref{prop15} shows the condition is equivalent to $M_h$ having a
spectral gap.
\begin{defin}
  The family $\E$ is weakly incoming to the set $\Omega$ if for any
  point $x_0\in\partial\Omega$ there exists $\epsilon>0,\
  \theta\in\{\pm1\}$ and $e\in\E$ such that, for $\c$ defined in
  \eqref{16},
\begin{equation}
\c(x_0+\theta te)<\c(x_0)\qquad\forall t\in]0;\epsilon].
\label{17}
\end{equation}
\label{def11}
\end{defin}

The following observation is simple and fundamental. Suppose that $\E$
is weakly incoming, then span$(\E)=\R^d$. Indeed, otherwise there is a
hyperplane $H=(\R\nu)^\bot$ of $\R^d$ such that span$(\E)\subset H$.
Since $\overline\Omega$ is compact, the function
$x\in\overline\Omega\mapsto\<x,\nu\>$ would have a global minimum in
some $x_0\in\partial\Omega$. Since $\Omega$ is open, $\Omega\subset
x_0+H^+$, where $H^+=\{y\in\R^d,\<y,\nu\>>0\}$. As $\E$ is weakly
incoming, there is $u\in\text{span}(\E)$ such that $\c(x_0+u)=0$. In
other words, $x_0+u\in\Omega\cap(x_0+H)$. This contradicts
$\Omega\subset x_0+ H^+$.

\begin{exa}
Consider $\Omega$ the convex hull of on equilateral triangle $(ABC)$ in $\R^2$ and $\E=\{e_1,e_2\}$ like on  \ref{fig1}. For $\alpha\in]0,\pi/3]$, $\E$ is weakly incoming to $\Omega$ whereas for $\alpha\in]\pi/3,\pi[$, condition \eqref{17} is satisfied in every point $x_0$ of the boundary excepted in point $A$.
\end{exa}

\begin{figure}[htbp]
\centering
\includegraphics[scale=0.5, trim=0in 0in 0in 0in, clip=true]{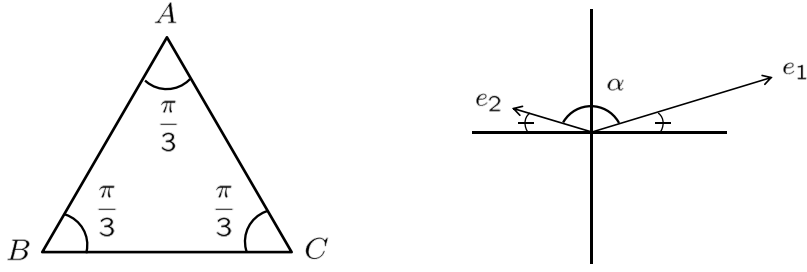}
\caption{Weakly incoming condition in the case of an equilateral triangle.}
\label{fig1}
\end{figure}

\begin{rem}
In the above case of doubly stochastic matrices, the set $\E=\{F(\vec{i})\}$ is weakly incoming. Indeed, if $A$ is in the boundary of $A_N$, there exists $i_1$ and $j_1$ such that $A_{i_1j_1}=0$. Since $A$ is doubly stochastic, there exists $i_2,j_2$ such that $A_{i_1j_2}>0$ and $A_{i_2j_1}>0$. Let $\epsilon=\min(A_{i_1j_2},A_{i_2j_1})/2$, then for all $t\in]0,\epsilon]$, $\c(A+tF(i_1,i_2,j_1,j_2))<\c(A)$.
\end{rem}

Denote $H_k=\text{ker}(\ell_k)$ and let $\nu_k$ be
the unit vector such that $\ell_k(\nu_k)>0$ and $H_k^+=\{y\in\R^d,\,\ell_k(y)>b_k\}$.

\begin{defin}
  Let $u\in\R^d\setminus\{0\}$. The vector $u$ is incoming to $H_k$ if
  $\<u,\nu_k\>\geq 0$. Further, $u$ is strictly incoming to $H_k$ if
  $\<u,\nu_k\>> 0$; $u$ is strictly outgoing to $H_k$ if
  $\<u,\nu_k\><0$; $u$ is parallel to $H_k$ if
  $u\in H_k$.
\label{def12}
\end{defin}
\begin{lem}
  Suppose that $\E$ is weakly incoming to $\Omega$ and let
  $x_0\in\overline\Omega$ and $k=\c(x_0)$. There exists $r>0$ and
  $I\subset\{1,\dots,m\}$ such that $\sharp I=k$ and  for
  $B(x_0,r)$ the open ball of radius $r$ about $x_0$,
\begin{equation}
\Omega\cap B(x_0,r)=B(x_0,r)\cap\left(\cap_{i\in I} H_{i}^+\right).
\label{18}
\end{equation}
Further, there exists $\beta_1,\dots,\beta_k,\in\{1,\dots,p\}$, a
family $(\theta_n)_{n=1,\ldots, k}$ of numbers in $\{\pm1\}$ and a bijection $\{1,\ldots, k\}\ni n\mapsto i_n \in I$ such that for
all $n\in\{1,\ldots, k\}$,
\begin{equation}\begin{gathered}
\theta_ne_{\beta_n}\text{ is strictly incoming to }H_{i_n};\\
\theta_ne_{\beta_n}\text{ is  incoming to }H_{i_m},\quad\forall m>n.
\end{gathered}\label{19}
\end{equation}
\label{lem13}
\end{lem}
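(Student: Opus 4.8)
The plan is to prove the local structure statement \eqref{18} first, then build the sequence $(\beta_n,\theta_n,i_n)$ by an inductive peeling argument. For \eqref{18}, note that since $\Omega$ is defined by the finitely many strict inequalities $\ell_j>b_j$, the set $J=\{j:\ell_j(x_0)>b_j\}$ is open-stable: there is $r_0>0$ so that $\ell_j>b_j$ on $B(x_0,r_0)$ for every $j\in J$. Setting $I=\{1,\dots,m\}\setminus J$, which by \eqref{15}--\eqref{16} has cardinality $k=\c(x_0)$, we get $\Omega\cap B(x_0,r_0)=B(x_0,r_0)\cap(\cap_{i\in I}H_i^+)$ provided $r_0$ is also small enough that no spurious constraint interferes; this is immediate. (If $k=0$ the ball is entirely inside $\Omega$ and \eqref{19} is vacuous, so assume $k\geq 1$.)

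For the sequence, I would argue by induction on $k=\c(x_0)$. The key geometric input is the weakly incoming hypothesis applied at $x_0$: it yields $\epsilon>0$, $\theta\in\{\pm1\}$ and $e=e_\beta\in\E$ with $\c(x_0+\theta t e)<\c(x_0)$ for all small $t>0$. I claim this forces $\theta e_\beta$ to be strictly incoming to at least one $H_i$, $i\in I$, and incoming to all the others. Indeed, by \eqref{18} a point $x_0+\theta t e$ (for $t$ small) lies in $\overline\Omega$ exactly when $\ell_i(x_0+\theta t e)\geq b_i$ for all $i\in I$, i.e. $t\langle\theta e_\beta,\nu_i\rangle\geq 0$; since $\c$ drops strictly, the point must enter $\Omega$ or at least a lower-codimension face, which means $\langle\theta e_\beta,\nu_i\rangle\geq 0$ for every $i\in I$ (otherwise $x_0+\theta t e\notin\overline\Omega$ and $\c=+\infty$, not $<k$), and strict inequality holds for at least one index — precisely the set of indices $i\in I$ with $\ell_i(x_0+\theta t e)=b_i$ for all small $t$ is $\{i\in I:\langle\theta e_\beta,\nu_i\rangle=0\}$, and $\c(x_0+\theta t e)=\#\{i\in I:\langle\theta e_\beta,\nu_i\rangle=0\}<k$ means at least one strict-incoming index exists. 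Pick one such index, call it $i_1$, set $\beta_1=\beta$, $\theta_1=\theta$. For the inductive step, move to a nearby point $x_1=x_0+\theta_1 t e_{\beta_1}$ with $t$ small; then $\c(x_1)=k-1$ and the associated index set is $I\setminus\{i_1\}$ (a subset of the original $I$, so the hyperplanes $H_{i_m}$, $m\geq 2$, are unchanged). Apply the induction hypothesis at $x_1$ inside $\Omega$ to produce $\beta_2,\dots,\beta_k$, $\theta_2,\dots,\theta_k$ and the remaining bijection onto $I\setminus\{i_1\}$, with $\theta_n e_{\beta_n}$ strictly incoming to $H_{i_n}$ and incoming to $H_{i_m}$ for $m>n$. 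Concatenating with the first triple gives \eqref{19}; the ball radius $r$ is taken to be the minimum of $r_0$ and the radii produced along the way (shrinking is harmless since \eqref{18} is preserved under shrinking $r$).

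One point needs care: for the induction to close I must choose $x_1$ so that its constraint set is exactly $I\setminus\{i_1\}$, not a proper subset. This is guaranteed by taking $t>0$ small and choosing $i_1$ to be an index with $\langle\theta_1 e_{\beta_1},\nu_{i_1}\rangle>0$; then among the indices of $I$, precisely those orthogonal to $\theta_1 e_{\beta_1}$ remain active at $x_1$, and I simply need that this active set be nonempty of the right size — but in general it could be smaller than $k-1$ if $\theta_1 e_{\beta_1}$ is strictly incoming to several hyperplanes at once. The honest fix is to phrase the induction on the pair $(k,\text{active set})$ and observe that the statement for a smaller active set is even stronger (fewer hyperplanes to be incoming to), or equivalently to process the strict-incoming indices one at a time by perturbing along $e_{\beta_1}$ in stages; either way the bookkeeping is routine. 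I expect the main obstacle to be exactly this combinatorial bookkeeping — tracking which hyperplanes remain active after each perturbation and verifying the "incoming to all later $H_{i_m}$" clause survives — rather than any genuinely hard estimate; the geometry is elementary once \eqref{18} is in hand.
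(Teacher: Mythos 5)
Your proposal follows the paper's inductive strategy exactly, and you correctly isolate the one subtlety: $\theta_1 e_{\beta_1}$ may be strictly incoming to several hyperplanes of the active set $I$, so the active set at the perturbed point $x_1$ can drop by some $q>1$ rather than by one. But the fix you sketch does not close the gap. ``Perturbing along $e_{\beta_1}$ in stages'' is not available geometrically: any positive step in the direction $\theta_1 e_{\beta_1}$ removes all $q$ strictly incoming constraints simultaneously, so they cannot be peeled off one at a time by moving. And re-parameterizing the induction on the pair $(k,\text{active set})$ still leaves positions $2,\dots,q$ of the required sequence unassigned, since the induction hypothesis at $x_1$ only produces a sequence of length $k-q$, not $k-1$.

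The correct resolution, which is what the paper's proof does (though it leaves the assignment implicit), is this: let $i_1,\dots,i_q$ be \emph{all} the indices in $I$ to which $\theta_1 e_{\beta_1}$ is strictly incoming, and simply set $\beta_n=\beta_1$ and $\theta_n=\theta_1$ for every $n\in\{1,\dots,q\}$. Then \eqref{19} holds automatically for $n\le q$: the vector $\theta_1 e_{\beta_1}$ is strictly incoming to $H_{i_n}$ by choice, and for any $m>n$ it is either strictly incoming (if $m\le q$) or parallel (if $m>q$), hence incoming in both cases. Now apply the induction hypothesis at a nearby point $x_1$ with $\c(x_1)=k-q$ and active set $I'=I\setminus\{i_1,\dots,i_q\}$ to produce $\beta_{q+1},\dots,\beta_k$, $\theta_{q+1},\dots,\theta_k$ and the remaining bijection onto $I'$. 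Concatenation gives the full sequence for $x_0$. With that adjustment your argument is complete and coincides with the paper's.
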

\begin{proof}
Proceed by induction on $k=\c(x_0)$. When $k=0$, there is nothing to prove.

Suppose that the property holds true at rank $k'\leq k-1$ and let
$x_0\in\partial\Omega$ be such that $\c(x_0)=k$. By definition of
$\Omega$, there exists $r>0$ and
$I\subset\{1,\dots,m\}$ with $\sharp I=k$ such that
\begin{equation}
\Omega\cap B(x_0,r)=B(x_0,r)\cap(\cap_{i\in I} H_{i}^+)
\label{110}
\end{equation}
Since $\E$ is weakly incoming to $\Omega$, there exists
$q\in\{1,\dots,k\},\ \theta_1=\pm1$, $\beta_1\in\{1,\dots,p\}$ and $i_1,\ldots,i_q\in I$ such
that $\theta_1e_{\beta_1}$ is strictly incoming to $H_{i_n}$ for
$n=1,\dots,q$ and $\theta_1e_{\beta_1}$ is parallel to $H_{i}$
for $i\in I':=I\setminus\{i_1,\ldots,i_q\}$. By definition of $\Omega$ there exists $x'_0$
close to $x_0$ and $r'>0$ such that
$B(x'_0,r')\cap\Omega=B(x_0',r')\cap(\cap_{i\in I'} H_{i}^+)$. From the induction
hypothesis, there exists $\beta_{q+1},\dots,\beta_{k}\in\{1,\ldots,p\}$,
$\theta_{q+1},\ldots,\theta_k=\pm 1$ and a bijection $\{q+1,\ldots,k\}\ni n\mapsto i_n \in I'$ such that for all $n\geq q+1$,
\begin{equation}\begin{gathered}
\theta_ne_{\beta_n}\text{ is strictly incoming to }H_{i_n};\\
\theta_ne_{\beta_n}\text{ is incoming to }H_{i_m}\qquad\forall m>n.
\end{gathered}\label{111}
\end{equation}
and the proof is complete.
\end{proof}
\begin{cor} 
  Suppose that $\E$ is weakly incoming to $\Omega$. There exist $r>0$ and $ \epsilon\in]0,1],$ such that for all $x_0\in\overline\Omega$, there exists $q\in
  \{1,\dots,p\}$ and $\theta_q=\pm1$ such that
\begin{equation}
x+t\theta_qe_q\in\Omega\qquad\forall x\in B(x_0,r)\cap\Omega,\ \forall t\in[0,\epsilon].
\label{112}
\end{equation}
\label{cor14}
\end{cor}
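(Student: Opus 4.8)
The plan is to upgrade the pointwise information from Lemma \ref{lem13} to a uniform statement by a compactness argument on $\overline\Omega$. First I would fix a point $x_0\in\overline\Omega$ and set $k=\c(x_0)$. If $k=0$ then $x_0\in\Omega$, and since $\Omega$ is open we may choose any $e_q\in\E$ (with $\theta_q=1$) and pick $r,\epsilon>0$ small enough that $B(x_0,r)+[0,\epsilon]e_q\subset\Omega$; this is possible because $\dist(x_0,\partial\Omega)>0$. If $k\geq 1$, Lemma \ref{lem13} supplies a ball $B(x_0,r_0)$, an index set $I$ with $\sharp I=k$, and in particular (taking $n=1$ in \eqref{19}) an index $\beta_1$ and sign $\theta_1$ such that $u:=\theta_1e_{\beta_1}$ is strictly incoming to $H_{i_1}$ and incoming to $H_{i_m}$ for all $m>1$. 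I claim that $u$ is then (non-strictly) incoming to every $H_i$, $i\in I$: this is exactly the content of the two lines of \eqref{19} once we note that $n=1$ is the smallest index, so every other $i\in I$ is some $i_m$ with $m>1$. Hence $\<u,\nu_i\>\geq 0$ for all $i\in I$, with strict inequality for $i=i_1$.

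Next I would check that this single vector $u$ works uniformly on a neighbourhood of $x_0$ inside $\Omega$. By \eqref{18}, shrinking $r_0$ if needed, $\Omega\cap B(x_0,r_0)=B(x_0,r_0)\cap(\cap_{i\in I}H_i^+)$, so on that ball membership in $\Omega$ is governed only by the constraints $\ell_i(\cdot)>b_i$, $i\in I$. For $x\in B(x_0,r_0)\cap\Omega$ and $t\geq 0$ we have $\ell_i(x+tu)=\ell_i(x)+t\<u,\nu_i\>\|\nabla\ell_i\|$ (after normalising, or simply $\ell_i(x+tu)=\ell_i(x)+t\,\ell_i(u)$ with $\ell_i(u)$ having the sign of $\<u,\nu_i\>$ up to a positive constant); since $\ell_i(u)\geq 0$ for all $i\in I$ we get $\ell_i(x+tu)\geq\ell_i(x)>b_i$, so $x+tu\in\cap_{i\in I}H_i^+$. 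The only thing that can fail is that $x+tu$ leaves the ball $B(x_0,r_0)$ on which \eqref{18} holds. But $\|x+tu-x_0\|\leq\|x-x_0\|+t\|u\|<r_0/2+t\|u\|$ once $\|x-x_0\|<r_0/2$, so choosing the local radius $r_{x_0}=r_0/2$ and the local step $\epsilon_{x_0}=r_0/(2\|u\|)$ guarantees $x+tu\in B(x_0,r_0)\cap(\cap_{i\in I}H_i^+)=\Omega\cap B(x_0,r_0)\subset\Omega$ for all $t\in[0,\epsilon_{x_0}]$ and all $x\in B(x_0,r_{x_0})\cap\Omega$. This establishes \eqref{112} with $q=\beta_1$, $\theta_q=\theta_1$, but with $x_0$-dependent constants.

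Finally I would remove the dependence on $x_0$ by compactness: the balls $\{B(x_0,r_{x_0})\}_{x_0\in\overline\Omega}$ cover the compact set $\overline\Omega$, so finitely many $B(x_0^{(1)},r_{x_0^{(1)}}),\dots,B(x_0^{(L)},r_{x_0^{(L)}})$ suffice. Set $r=\min_\ell r_{x_0^{(\ell)}}$ and $\epsilon=\min(1,\min_\ell\epsilon_{x_0^{(\ell)}})$, both strictly positive. Given any $x_0\in\overline\Omega$, it lies in some $B(x_0^{(\ell)},r_{x_0^{(\ell)}})$; I would like to conclude that the pair $(\beta_1^{(\ell)},\theta_1^{(\ell)})$ attached to $x_0^{(\ell)}$ also works at scale $r$ around $x_0$. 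The mild point to watch is that $B(x_0,r)$ need not sit inside $B(x_0^{(\ell)},r_{x_0^{(\ell)}})$; the clean fix is to apply the finite-cover argument to the open cover by the \emph{half-radius} balls $B(x_0,r_{x_0}/2)$ and keep track of the full-radius balls, so that any $x_0$ in a selected half-radius ball $B(x_0^{(\ell)},r_{x_0^{(\ell)}}/2)$ has $B(x_0,r)\subset B(x_0^{(\ell)},r_{x_0^{(\ell)}})$ as soon as $r\leq r_{x_0^{(\ell)}}/2$. With this adjustment, for $x\in B(x_0,r)\cap\Omega$ and $t\in[0,\epsilon]$ we are in the situation handled in the previous paragraph for $x_0^{(\ell)}$, hence $x+t\theta_1^{(\ell)}e_{\beta_1^{(\ell)}}\in\Omega$, which is \eqref{112}. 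The only real obstacle is this bookkeeping of radii in the compactness step; the linear-algebra core is immediate once Lemma \ref{lem13} is granted.
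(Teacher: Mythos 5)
Your proposal is correct and follows essentially the same route as the paper: invoke Lemma \ref{lem13} to produce a single direction $\theta_1 e_{\beta_1}$ that is (weakly) incoming to every active face $H_i$, $i\in I$, observe by linearity of the $\ell_i$ that moving along this direction cannot violate any of the local constraints, and then upgrade from local to uniform by compactness of $\overline\Omega$. The only difference is cosmetic: the paper asserts the compactness step in one sentence and works with $B(x_0,2r)$ so that the iterate stays in the ball where \eqref{18} holds, while you spell out the half-radius bookkeeping explicitly; both are sound.
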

\begin{proof}
The fact that $r,\epsilon>0$ can be chosen uniformly with respect to $x_0$ follows easily from compactness of $\overline\Omega$.
  The statement is trivial when $x_0\in \Omega$. Suppose that
  $x_0\in\partial\Omega$.  From Lemma \ref{lem13}, there exists $r>0$
  and $I=\{i_1,\dots,i_k\}\subset\{1,\dots,p\}$ such that
\begin{equation}
\Omega\cap B(x_0,2r)=B(x_0,2r)\cap\left(\cap_{i\in I}H_{i}^+\right)
\label{113}
\end{equation}
and $\theta_1=\pm1,\ \beta_1\in\{1,\dots,p\}$ such that
\begin{equation}\begin{gathered}
\theta_1e_{\beta_1}\text{ is strictly incoming to }H_{i_1};\\
\theta_1e_{\beta_1}\text{ is incoming to }H_{i_q},\;\forall q>1.
\end{gathered}\label{114}
\end{equation}
Let $x\in B(x_0,r)\cap\Omega$ and $\epsilon\in]0,r[$. Then
\begin{equation}
\<\nu_{i},\theta_1{\beta_1}\>\geq0,\,\forall i\in I\Longrightarrow x+t\theta_1e_{\beta_1}\in\Omega.
\label{115}
\end{equation}
Thanks to \eqref{114}, the left hand side of the above property is satisfied and the proof is
complete.
\end{proof}
\begin{prop} 
  The family $\E$ is weakly incoming to $\Omega$ iff $1$ is not in the
  essential spectrum of $M_h$.
\label{prop15}
\end{prop}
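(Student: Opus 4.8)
The plan is to prove the two implications separately, working with the self-adjoint operator $M_h$ on $L^2(\Omega)$ and the decomposition $M_h = K_h + m_h$, where $m_h(x) = 1 - K_h(1)(x) \ge 0$ is multiplication by a bounded function and $K_h$ is a positive integral operator. The first task is to understand what it means for $1$ to lie in the essential spectrum: since $\|M_h\|_{L^2\to L^2}=1$, we have $1 \in \Spec(M_h)$, and $1$ fails to be in the essential spectrum exactly when there is a spectral gap, i.e. $\Spec(M_h) \cap (1-\delta,1] = \{1\}$ with $1$ an isolated eigenvalue of finite multiplicity (here of multiplicity one, eigenfunction the constant). Equivalently, by the variational characterization, $1\notin\Spec_{\mathrm{ess}}(M_h)$ iff there is $\delta>0$ with $\langle (\mathrm{Id}-M_h)f,f\rangle \ge \delta\|f\|^2$ for all $f\perp 1$; a Weyl-sequence formulation will be the technically convenient one.

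For the direction ``$\E$ weakly incoming $\Rightarrow 1\notin\Spec_{\mathrm{ess}}$'', the key geometric input is Corollary \ref{cor14}: there are uniform $r,\epsilon>0$ so that near every $x_0\in\overline\Omega$ some direction $\theta_q e_q$ pushes a whole ball $B(x_0,r)\cap\Omega$ into $\Omega$ for times in $[0,\epsilon]$. I would first write the Dirichlet-type form
\begin{equation}
\langle (\mathrm{Id}-M_h)f,f\rangle = \frac{1}{p}\sum_{j=1}^p \frac{1}{4}\int_\Omega\!\!\int_{-1}^1 1_\Omega(x+hte_j)\,\bigl(f(x+hte_j)-f(x)\bigr)^2\,dt\,dx,
\label{dirichletform}
\end{equation}
which comes from the self-adjointness and the identity $m_{j,h}=1-K_{j,h}(1)$. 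This shows $\mathrm{Id}-M_h\ge0$ and that its quadratic form controls local oscillations of $f$ in the admissible directions. Using a partition of unity subordinate to a finite cover of $\overline\Omega$ by balls of radius $r$, together with the sweeping property of Corollary \ref{cor14}, one propagates a Poincaré-type inequality: the local $L^2$ oscillation of $f$ on each chart is bounded by the form \eqref{dirichletform}, and chaining these along the admissible directions (each move of length at most $h\epsilon$, finitely many moves needed by compactness) yields a global Poincaré inequality $\|f-\bar f\|^2 \le C_h\,\langle(\mathrm{Id}-M_h)f,f\rangle$. That gives the spectral gap. The honest obstacle here is making the chaining quantitative and uniform — i.e. showing that one can reach a fixed reference ball from any chart by a bounded number of admissible $\pm e_j$-moves while staying inside $\Omega$, controlling the accumulation of constants; this is exactly where the ordering structure \eqref{19} in Lemma \ref{lem13} and the uniformity in Corollary \ref{cor14} are used, and it is the crux of the argument.

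For the converse, I would prove the contrapositive: if $\E$ is not weakly incoming, then $1\in\Spec_{\mathrm{ess}}(M_h)$. By the remark following Definition \ref{def11}, either $\mathrm{span}(\E)\subsetneq\R^d$ — in which case $\Omega$ is foliated by segments on which $M_h$ acts as an honest stochastic operator transverse to a whole direction, and constant-along-a-hyperplane-complement perturbations supply a singular Weyl sequence for the eigenvalue $1$ — or $\mathrm{span}(\E)=\R^d$ but there is a bad boundary point $x_0$ where condition \eqref{17} fails for every sign and every $e\in\E$. In the latter case, at $x_0$ every direction $\theta e_j$ is outgoing or parallel to at least one active face, so $x_0$ is a ``trap'': for small $h$ the chain started near $x_0$ leaves a neighborhood only with probability $O(h)$ per step in the relevant directions. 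Concretely, I would build $f_\eta = 1_{U_\eta}$-type bumps (or smoothed versions) supported in shrinking neighborhoods $U_\eta$ of $x_0$ inside $\Omega$ and show $\langle(\mathrm{Id}-M_h)f_\eta,f_\eta\rangle = o(\|f_\eta\|^2)$ as $\eta\to0$, using \eqref{dirichletform}: the only contributions are from $x\in U_\eta$ with $x+hte_j\notin U_\eta$, a set of measure $o(|U_\eta|)$ precisely because no admissible direction genuinely points into $\Omega$ at $x_0$. Since the $f_\eta$ can be taken orthonormal (disjoint supports on a sequence $\eta_k\to0$) and orthogonal to the constants after a harmless correction, this is a singular sequence for $M_h$ at $1$, hence $1\in\Spec_{\mathrm{ess}}(M_h)$. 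I expect the delicate point on this side to be verifying the measure estimate $|\{x\in U_\eta : x+hte_j\notin\Omega \text{ for some } t\}| = o(|U_\eta|)$ with the right uniformity in $\eta$ — which again reduces to the local geometry of the faces through $x_0$ and the failure of \eqref{17}.
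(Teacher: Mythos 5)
Your overall architecture — forward direction via a spectral gap, converse via a singular Weyl sequence localized near a bad boundary point — matches the paper, and your Dirichlet-form identity \eqref{dirichletform} is correct. But both halves have real gaps.

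For the forward implication, the paper does \emph{not} prove a Poincar\'e inequality by chaining moves along admissible directions. It instead invokes the Doeblin-type minorization of Proposition~\ref{prop22}, namely that some fixed iterate satisfies $M_h^N(x,dy)\geq c_1h^{-d}\indic_{|x-y|<c_2h}\,dy$, and then appeals to Theorem~1.1 of \cite{pd167}, where the spectral consequences of such a minorization (ball-walk comparison, Nash inequality on a torus, transfer back) are worked out. That is a qualitatively different route: the minorization replaces the chaining by absorbing it into the probabilistic statement ``after $N$ steps the kernel dominates a ball walk.'' Your proposed program — ``chain local oscillations along admissible $\pm e_j$-moves, controlling the accumulation of constants'' — is exactly what Proposition~\ref{prop22} accomplishes in disguise, and it is genuinely the hard part; you flag it as ``the crux'' but do not carry it out, so as written this half is not a proof.

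For the converse, your intuition is right but the estimate you would need to verify is not the one you state. The danger is that, for a fixed $h$, a single proposed step has length $\sim h$, which can cross the whole support of a bump of diameter $\eta\ll h$; so if $x+hte_j$ stayed in $\Omega$, the symmetric-difference set would have measure comparable to $|U_\eta|$, not $o(|U_\eta|)$, and the Dirichlet form would not be small. What actually saves the argument is the following geometric fact, which the paper derives from the failure of \eqref{17}: at the bad point $x_0$ every $e_j$ is either parallel to all the active faces (whence $(1-M_{j,h})f=0$ for $f$ depending only on the transverse coordinates $x'$), or both $\pm e_j$ are strictly outgoing to some active face. In the second case, for $x$ within distance $\lambda h$ of the active faces the proposal $x+hte_j$ exits $\Omega$ unless $|t|\lesssim\lambda$, so the move is \emph{rejected} outside a $t$-set of measure $O(\lambda)$; this is precisely the potential bound $1-m_{j,h}(x)\leq C\lambda$ of \eqref{117}, and it gives $\langle(1-M_h)f,f\rangle\leq C\lambda\|f\|^2$ directly, without appealing to \eqref{dirichletform}. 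The test functions are then annular shells $f_{\lambda h}(x)=(\lambda h)^{-\dim W/2}\chi(x'/(\lambda h))$ with $\chi=\indic_{1/2<|x'|<1}$, taken at the dyadic scales $\lambda=2^{-n}$: this makes them pairwise orthogonal with $L^2$ norm bounded below, and $\langle(1-M_h)f_{2^{-n}h},f_{2^{-n}h}\rangle\leq C2^{-n}\to0$, giving the singular sequence. So the missing ideas on your side are (a) the classification of directions at a bad point into ``parallel'' and ``bidirectionally strictly outgoing,'' (b) the crucial observation that strictly outgoing directions lead to rejection, not to a small symmetric difference, and (c) the introduction of a thickness parameter $\lambda$ decoupled from $h$, with annular supports for orthogonality.

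Finally, your separate treatment of the degenerate case $\mathrm{span}(\E)\subsetneq\R^d$ is correct but unnecessary: the paper's construction at a bad boundary point handles it uniformly (indeed if every $e_j$ is parallel to all the active faces one even gets genuine eigenfunctions at $1$).
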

\begin{proof}
  If $\E$ is weakly incoming to $\Omega$, $1$ is not in the essential
  spectrum of $M_h$ thanks to Proposition \ref{prop22} of this paper
  and Theorem 1.1 in \cite{pd167}.

  Suppose now that $\E$ is not weakly incoming to $\Omega$. This means
  that there exists $x_0\in\partial\Omega$ such that \eqref{17} does
  not hold. Let $k=\c(x_0)$. There exists a neighborhood $V$
  of $x_0$ and $I\subset\{1,\dots,m\}$ with $\sharp I=k$
  such that $V\cap\Omega=V\cap(\cap_{i\in I}H_{i})$. Then, for
  any $\theta=\pm1$ and any $j\in\{1,\dots,p\}$, the following holds
  true:
\begin{center}
If $\theta e_j$ is strictly incoming to one of the $(H_i)_{i\in I}$, \\
then $\theta e_j$ is strictly outgoing to one of the $(H_i)_{i\in I}$.
\end{center}
Otherwise, there is $j\in\{1,\dots,p\}$ and $\theta=\pm1$ such that
$\theta e_j$ is strictly incoming to one of the $(H_i)_{i\in I}$ and
incoming to the other. Then for $t>0$ small enough, $\c(x_0+\theta
te_j)<\c(x_0)$.

Hence, assume that there exists $r\geq1$ such that 
\begin{itemize}
\item for any $j\in\{1,\dots,r\},\ e_j$ and $-e_j$ are strictly
  outgoing to some of the $(H_i)_{i\in I}$;
\item for any $j\in\{r+1,\dots,p\},\ e_j$ is parallel to the
  $(H_i)_{i\in I}$.
\end{itemize}
Recall that $\nu_i$ denotes the unit incoming orthogonal vector to
$H_i$. Let $W=\text{span}(\nu_i,i\in I)$ and near $x_0$ use the
variable $x=x_0+(x',x'')$ with $x'\in W$ and $x''\in W^\bot$. Let
$\chi(x')=\indic_{\tfrac12<| x'|<1}$ and for $\lambda,h>0$ denote
$f_{\lambda h}(x)=(\lambda h)^{-dim(W)/2}\chi(\tfrac{x'}{\lambda h})$.
Since any $v\in W^\bot $ is parallel to the $(H_i)_{i\in I}$, there
exists $\lambda_0,c_0>0$ such that for all $h\in]0,1]$ and
$\lambda\in]0,\lambda_0],\ \Vert f_{\lambda
  h}\Vert_{L^2(\Omega)}\geq c_0$.

For any $j\in\{r+1,\dots,p\},\ e_j$ is parallel to the $(H_i)_{i\in
  I}$. Hence, the function $t\mapsto f_{\lambda h}(x+hte_j)$ is
constant and $(M_{j,h}-1)f_{\lambda h}(x)=0$.

On the other hand, for any $j\in\{1,\dots,r\}$ there exists
$i_j,i'_j\in I$ such that $e_j$ is strictly outgoing to
$H_{i_j}$ and $-e_j$ is strictly outgoing to $H_{i'_j}$.
Consequently, there exists $\gamma_j,\delta_j>0$ such that for $t>0$,
\begin{equation}\begin{aligned}
x\in\Omega\text{ and }x-te_j\in\Omega\Longrightarrow\dist(x-te_j,H_{i'_j})&\leq\dist(x,H_{i'_j})-\gamma_j t\\
x\in\Omega\text{ and }x+te_j\in\Omega\Longrightarrow\dist(x+te_j,H_{i_j})&\leq\dist(x,H_{i_j})-\delta_j t.
\end{aligned}\label{116}
\end{equation}
Let us compute the potential $m_{j,h}$ on the support of $f_{\lambda
  h}$. For $x\in\supp(f_{\lambda h})$, $|x_j|\leq\lambda h$ for
all $j=1,\dots,r$. In particular dist$(x,H_{i_j})\leq\lambda h$
and dist$(x,H_{i'_j})\leq\lambda h$ and thanks to \eqref{116},
\begin{equation}\begin{split}
1-m_{j,h}(x)&=\int_0^11_\Omega(x+hte_j)+1_\Omega(x-hte_j)\,dt\\
&\leq\int_{0\leq t\leq\dist(x,H_{i_j})/(\delta_j h)}\,dt+ 
   \int_{0\leq t\leq\dist(x,H_{i'_j})/(\gamma_j h)}\,dt\leq\lambda\left(\dfrac1{\gamma_j}+\dfrac1{\delta_j}\right).
\end{split}\label{117}
\end{equation}
Finally, 
\begin{equation}\begin{split}
\left\<(1-M_h)f_{\lambda h},f_{\lambda h}\right\>_{L^2(\Omega)}&=\dfrac1{p}\sum_{j=1}^r\left\<(1-M_{j,h})f_{\lambda h},f_{\lambda h}\right\>_{L^2(\Omega)}\\
&\leq\dfrac1{p}\sum_{j=1}^r\int_\Omega\left(1-m_{j,h}(x)\right)|f_{\lambda h}(x)|^2\,dx\leq C\lambda\|f_{\lambda h}\|_{L^2(\Omega)}^2.
\end{split}\label{118}
\end{equation}
Here we used the fact that for any non-negative fonction $f$, one has $\<K_{j,h}f,f\>\geq 0$. Finally, we conclude by taking $\lambda=2^{-n}\to0$ as $n\to\infty$. Indeed, the
functions $f_{2^{-n}h}$ are mutually orthogonal. Their norm is bounded
uniformly from below and they satisfy
$0\leq\<(1-M_h)f_{2^{-n}h},f_{2^{-n} h}\>\leq C2^{-n}$.
\end{proof}

\section{Spectral Analysis of the Metropolis Operator}\label{sec2}

This section is devoted to the analysis of the spectral theory of the
Metropolis operator. For this purpose, we introduce a Laplace operator
associated to the family $\E$ to be used as a model. For any $e\in
\R^d\setminus\{0\}$ and any smooth function $u$, define
$\partial_eu(x)=\tfrac{d}{dt}(u(x+te))_{| t=0}$. Then, consider the
operator $\Delta_\E$, defined by
\begin{equation}\begin{aligned}
\Delta_\E u&=\dfrac1{6p}\sum_{j=1}^p\partial_{e_j}^2u\\
D(\Delta_\E)&=\left\{u\in H^1(\Omega),\,\Delta_\E u\in L^2,\partial_{n,\E}u_{|\partial\Omega}=0\right\}
\end{aligned}\label{21}
\end{equation}
with
$\partial_{n,\E}u(x)=\tfrac1{p}\sum_{j=1}^p\<n(x),e_j\>\partial_{e_j}u(x)$,
$n(x)$ denoting the outgoing normal vector to the boundary at point
$x$. If the domain $\Omega$ has smooth boundary, the normal derivative
is well defined.  In the case where it is Lipschitz, it can be defined
by duality in the following way.

Define first the gradient and divergence associated to the family
$\E$, by $\div_\E u=\tfrac1{p}\sum_{j=1}^p\partial_{e_j}u_j$ for any
$u=(u_1,\dots,u_p)$ and $\nabla_\E
u=(\partial_{e_1}u,\dots,\partial_{e_p}u)$. Then, define a trace
operator $\gamma_\E$ by
\begin{equation}
\gamma_\E:\left\{u\in\left(L^2(\Omega)\right)^p,\div_\E(u)\in L^2(\Omega)\right\}\to H^{-1/2}(\partial\Omega)
\label{22}
\end{equation}
and and for $v\in H^1(\Omega)$, 
\begin{equation}
\int_{\Omega}\div_\E(u)(x)v(x)\,dx=-\frac1{p}\int_{\Omega}\left\<u(x),\nabla_\E v(x)\right\>_{\C^p}\,dx+
\int_{\partial\Omega}\gamma_\E(u)v|_{\partial\Omega}\,d\sigma(x).
\label{23}
\end{equation}
In particular, for $u\in H^1(\Omega)$ satisfying $\Delta_\E
u=\tfrac16\div_\E\nabla_\E u\in L^2(\Omega)$ define
$\partial_{n,\E}u|_{\partial\Omega}=\gamma_\E(\nabla_\E u)\in
H^{-1/2}(\partial\Omega)$ and the set $D(-\Delta_\E)$ is well defined.
The Dirichlet form associated with $-\Delta_\E$ is
\begin{equation}
\mathcal{E}_\E(u)=\dfrac1{6p}\sum_{j=1}^p\int_\Omega|\partial_{e_j}u(x)|^2\,dx.
\label{24}
\end{equation}

Let $\E_0$ be the canonical basis in $\R^d$. Then,
$\Delta_{\E_0}=\tfrac1{6d}\Delta$ where $\Delta$ is the usual Laplace
operator and $\mathcal{E}_{\E_0}(f)=\tfrac1{6d}\int_\Omega|\nabla
f|^2dx$ is the usual Dirichlet form. Since $\E=\{e_1,\dots,e_p\}$
spans $\R^d$, a simple calculation shows that there exists a constants
$C>0$ such that
\begin{equation}
C^{-1}\mathcal{E}_{\E_0}(f)\leq\mathcal{E}_{\E}(f)\leq C\mathcal{E}_{\E_0}(f).
\label{25}
\end{equation}

Then, it is standard to show that $-\Delta_\E$ is the self-adjoint
realization of the Dirichlet form $\mathcal{E}_\E$. A standard
argument using Sobolev embedding shows that $-\Delta_\E$ has compact
resolvant. Denote its spectrum by $\nu_0=0<\nu_1<\nu_2<\dots$ and
by $m_j$ the associated multiplicities. Observe that
$m_0=1$. \ref{sec4} shows that $h^{-2}(1-M_h)$ converges to
$-\Delta_\E$ in the strong resolvent sense so that eigenvalues and
eigenvectors converge; see \cite{reedIV}.

The main theorem of this section follows.
\begin{thm}
Suppose that $\E$ is weakly incoming to $\Omega$, then the following hold true.
\begin{enumerate}
\item[i)] There exists $h_0>0$, $\delta_0\in]0,\tfrac12[$ and a
  positive constant $C$ such that for any $h\in]0,h_0]$, the spectrum
  of $M_h$ is a subset of $[-1+\delta_0,1]$, $1$ is a simple
  eigenvalue and $\Spec(M_h)\cap[1-\delta_0,1]$ is discrete.

\item[ii)] For any $h\in]0,h_0]$ and
  $0\leq\lambda\leq\delta_0h^{-2}$, the number of eigenvalues of
  $M_{h}$ in $[1-h^2\lambda,1]$ (with multiplicity) is bounded by
  $C(1+\lambda)^{d/2}$.
  
\item[iii)] For any $R>0$ and $\varepsilon>0$ such that
  $\nu_{j+1}-\nu_j>2\varepsilon$ for $\nu_{j+2}<R$, there exists
  $h_1>0$ such that one has for all $h\in]0,h_1]$,
\begin{equation}
\Spec\left(\frac{1-M_{h}}{h^2}\right)\cap]0,R]\subset\cup_{j\geq1}[\nu_j-\varepsilon,\nu_j+\varepsilon],
\label{26}
\end{equation}
and the number of eigenvalues of $\tfrac{1-M_{h}}{h^2}$ in the interval
$[\nu_j-\varepsilon,\nu_j+\varepsilon]$ is equal to $m_j$.
\end{enumerate}
\label{thm21}
\end{thm}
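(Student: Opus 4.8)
The plan is to combine three ingredients: (a) the decomposition $M_h = K_h + m_h$ and the fact that $K_h$ is a positivity-preserving operator which is, up to the potential $m_h$, a "ball-walk-like" averaging operator to which the machinery of \cite{pd167,lebeau} applies; (b) a Nash inequality for the Dirichlet form $\langle (1-M_h)f,f\rangle$ that controls the bottom of the spectrum and yields the Weyl-type counting bound; and (c) the strong resolvent convergence $h^{-2}(1-M_h)\to -\Delta_\E$ proved in \ref{sec4}, which transfers spectral data from the limit operator to $M_h$.

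\emph{Step 1 (bottom of the spectrum, part i).} First I would show $\Spec(M_h)\subset[-1+\delta_0,1]$. The upper bound $1$ is immediate since $\|M_h\|=1$ and $M_h$ is self-adjoint. For the lower bound one writes, for $\|f\|=1$,
\begin{equation}
\langle (1+M_h)f,f\rangle = \frac1p\sum_{j=1}^p\Big(\langle (1+m_{j,h})f,f\rangle + \langle K_{j,h}f,f\rangle\Big),
\end{equation}
and uses that $m_{j,h}\geq 0$ together with a lower bound of the form $\langle K_{j,h}f,f\rangle \geq -(1-\eta)\|f\|^2$ coming from the positivity of the averaging kernel and the observation that $K_{j,h}$ is close to a Markov averaging operator whose spectrum is in $[0,1]$ up to boundary corrections; the corrections are exactly where the weakly-incoming hypothesis (via Corollary \ref{cor14}) guarantees a uniform amount of mass stays inside $\Omega$. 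That $1$ is simple and that $\Spec(M_h)\cap[1-\delta_0,1]$ is discrete is precisely Proposition \ref{prop15} (essential spectrum away from $1$) combined with ergodicity of the chain, which follows from $\E$ spanning $\R^d$ together with the weakly-incoming condition.

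\emph{Step 2 (Weyl bound, part ii).} The key is a Nash inequality: there is $C>0$ with
\begin{equation}
\|f\|_{L^2}^{2+4/d} \leq C\,\Big(\langle (1-M_h)f,f\rangle\, h^{-2} + \|f\|_{L^2}^2\Big)\,\|f\|_{L^1}^{4/d}
\end{equation}
for all $h\in]0,h_0]$, uniformly in $h$. This is obtained by comparing the Dirichlet form of $M_h$ with that of $-\Delta_\E$ (using $m_{j,h}\geq 0$ and a discrete-to-continuous comparison at scale $h$, exploiting \eqref{25} and the Nash inequality for the ordinary Dirichlet form on the bounded Lipschitz domain $\Omega$). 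Feeding this Nash inequality into the standard Carlen--Kusuoka--Stroock argument converts it into the on-diagonal heat-kernel-type bound that yields: the number of eigenvalues of $h^{-2}(1-M_h)$ below $\lambda$ is $O((1+\lambda)^{d/2})$, which is exactly (ii). The main obstacle here is establishing the Nash inequality with a constant uniform in $h$ despite the singular (one-dimensional along each $e_j$) nature of $K_{j,h}$; one has to sum the directional energies and invoke that $\E$ spans $\R^d$ to recover a full gradient, controlling the error terms near $\partial\Omega$ via Corollary \ref{cor14}.

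\emph{Step 3 (eigenvalue localization, part iii).} Here I would invoke the strong resolvent convergence $h^{-2}(1-M_h)\to -\Delta_\E$ from \ref{sec4}. Strong resolvent convergence gives, for any interval $[a,b]$ with $a,b\notin\Spec(-\Delta_\E)$, that the spectral projections converge strongly, hence that eigenvalues of $h^{-2}(1-M_h)$ in any fixed compact set cluster only near the $\nu_j$, giving the inclusion \eqref{26}. To get the exact multiplicity count $m_j$ in each window $[\nu_j-\varepsilon,\nu_j+\varepsilon]$, strong resolvent convergence alone is not quite enough; I would upgrade it using the uniform Weyl bound from (ii) (which prevents loss of eigenvalues to $+\infty$ or accumulation) together with the lower-semicontinuity of spectral multiplicity under strong resolvent convergence combined with a min-max argument: the Weyl bound forces the total count in $]0,R]$ to be $\leq \sum_{\nu_j\leq R} m_j + o(1)$, while lower-semicontinuity forces each window to contain $\geq m_j$ eigenvalues; these two inequalities pin the count to exactly $m_j$. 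The delicate point is ensuring no eigenvalue "escapes" across the gap $(\nu_j,\nu_{j+1})$, which is handled by the gap hypothesis $\nu_{j+1}-\nu_j>2\varepsilon$ and a standard argument bounding the resolvent of $h^{-2}(1-M_h)$ uniformly on the circle $|z-\nu_j|=\varepsilon$ for $h$ small.

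The step I expect to be the main obstacle is Step 2: proving the Nash inequality with constants uniform in $h$. The singular proposal distribution means $K_{j,h}$ smooths only in one direction, so the usual elliptic/Sobolev tools do not apply directly; one must carefully combine the $p$ directional Dirichlet forms, use that $\E$ spans, and control boundary layers of width $O(h)$ where $m_{j,h}$ is not small — precisely the region where the weakly-incoming geometry (Lemma \ref{lem13}, Corollary \ref{cor14}) is essential.
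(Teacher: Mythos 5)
Your outline captures the right general shape — a Doeblin-type comparison with a ball walk, a Weyl/Nash-type counting bound, and a passage to the limit operator $-\Delta_\E$ — but there are two genuine gaps, one in Step 1 and one in Step 3.

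In Step 1, the lower bound on $\Spec(M_h)$ and the discreteness near $1$ do not follow from a soft estimate of the form $\langle K_{j,h}f,f\rangle \geq -(1-\eta)\|f\|^2$. The paper's argument hinges on the sharper pointwise kernel bound of Proposition \ref{prop22}: some fixed iterate $M_h^N(x,dy)$ dominates $c_1 h^{-d}\indic_{|x-y|<c_2h}\,dy$ uniformly in $h$, where $N$, $c_1$, $c_2$ are constructed using Lemma \ref{lem13} and Corollary \ref{cor14}. From this one gets $\|\mu_h\|_{L^\infty\to L^\infty}\leq 1-\delta_0'$, which gives simultaneously the lower spectral bound (via \eqref{232}), the simplicity of $1$ (an eigenfunction with $M_hu=u$ has $\mathcal E_{h,N}(u)=0$, hence $u$ is constant by \eqref{230}), and, after comparison with a torus random walk, the discreteness of $\Spec(M_h)\cap[1-\delta_0,1]$ and the decomposition in Lemma \ref{lem23} that drives part ii). Your proposal only gestures at this; without the explicit Doeblin minorization the chain of implications does not close. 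Also, appealing to ``ergodicity'' for simplicity of $1$ is close to circular: ergodicity is exactly what is being established.

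In Step 3 the decisive tool you invoke — ``lower-semicontinuity of spectral multiplicity under strong resolvent convergence'' — is not a valid fact: strong resolvent convergence preserves neither multiplicities nor even the discreteness of the spectrum (eigenvalues can merge into continuous spectrum in the limit, and conversely). The paper does not rely on the strong resolvent convergence of Section~\ref{sec4} at all for Theorem \ref{thm21}; instead it proves directly, from Lemma \ref{lem24} together with the decomposition from Lemma \ref{lem23} and Rellich compactness, that orthonormal eigenfunctions $f_h$ of $|\Delta_h|$ with eigenvalues near $\nu_j$ have weak $H^1$ limits that are $L^2$-orthonormal eigenfunctions of $-\Delta_\E$ with eigenvalue $\nu_j$; this gives the inclusion \eqref{245bis} and the upper bound $\sharp\Spec(|\Delta_h|)\cap[\nu_j-\varepsilon,\nu_j+\varepsilon]\leq m_j$. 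Equality is then obtained by a min--max argument with the explicit finite-dimensional test space $G$ spanned by the first $1+\sigma_{N+1}$ eigenfunctions of $-\Delta_\E$, using \eqref{248}--\eqref{249} to show $\mu^h_{\sigma_{N+1}}\leq\nu_{N+1}+\varepsilon$. In fact Section~\ref{sec4} is proved using Lemma \ref{lem24}, so appealing to it here would be circular as well as insufficient. Your Step 2 route via a Nash inequality and Carlen--Kusuoka--Stroock is plausibly equivalent to the paper's Lemma \ref{lem23}-based Weyl bound and is acceptable in spirit, but it too would ultimately rest on the Proposition \ref{prop22} minorization, which needs to be established first.
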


A consequence of this theorem is that $M_h$ has a spectral gap
$g(h)=1-\sup(\Spec(M_h)\setminus\{1\})>0$ and that $\lim_{h\to
  0^+}h^{-2}g(h)=\nu_1$. This will be used in the proof of total
variation estimates.

The strategy used to prove the first part of Theorem \ref{thm21} is
very close to the one given in \cite{pd167}. First, show that some
iterate of the Markov kernel ``controls'' the random walk on a ball.
Next, this ball walk on the polytope is compared to the same walk on a
large torus containing $\Omega$. Finally the information on the torus
is transferred back to the original problem.

The proof of the last part of Theorem \ref{thm21} is slightly
different from the proof in \cite{pd167}. Indeed, the starting point
of the analysis in \cite{pd167} is that for regular function $\varphi$ with
normal derivatives vanishing on the boundary, $h^{-2}(1-T_h)\varphi$
is close to $-\Delta\varphi$ up to the boundary, where $T_h$ is the
Metropolis operator associated to the kernel
$\vol(B(0,1))^{-1}h^{-d}1_{| x-y|<h}$. Here, this property fails to be
true. Suppose for instance that $\Omega\subset \R^2$ and that its
boundary is given near $(0,0)$ by $x_1\geq 0$. Suppose that
$e_1=(a,b)$ and $e_2=(b,-a)$ for some $a,b>0$. Then
\begin{equation}\begin{split}
h^{-2}(1-M_{1,h})f(x)&=\dfrac1{2h^3}\int_{| t|<h,x+te_1\in\Omega}\left(f(x)-f(x+te_1)\right)\,dt\\
&=-\dfrac1{2h^3}\partial_{e_1}f(x)\int_{| t|<h,x+te_1\in\Omega}t\,dt+O(1)\\
&=\dfrac1{4h^3}\partial_{e_1}f(x)1_{]0,ah]}(x_1)\left(h^2-\frac{x_1^2}{a^2}\right)+O(1).
\end{split}\label{27}
\end{equation}
A similar expression holds for $M_{2,h}$ and summing these equalities
gives
\begin{multline}
h^{-2}(1-M_h)f(x)=\dfrac1{4h^3}\left(\partial_{e_1}f(0,x_2)1_{]0,ah]}(x_1)\left(h^2-\dfrac{x_1^2}{a^2}\right)\right.\\
+\left.\partial_{e_2}f(0,x_2)1_{]0,bh]}(x_1)\left(h^2-\dfrac{x_1^2}{b^2}\right)\right)+O(1)
\label{28}
\end{multline}
If $a=b,\ \partial_{e_1}f+\partial_{e_2}f$ is proportional to the
normal derivative of $f$ and hence, the above quantity is bounded.

Suppose now that $a<b$. Then the above quantity is bounded on
$x_1\in[ah,bh]$ provided $\partial_{e_2}f(0,x_2)=0$. Then the same
argument on $[0,ah]$ shows that $\partial_{e_1}f(0,x_2)=0$ also.

In order to avoid these difficulties, we work directly on the
quadratic form and show that the Dirichlet form associated to the
Metropolis operator converges to the Dirichlet form of the Laplace
operator with Neuman boundary conditions. The end of this section is
devoted to the proof of Theorem \ref{thm21}.
\begin{prop}
  There exists $N\in\N$ and constants $c_1,c_2>0$ such that for all
  $h\in]0,1]$
\begin{equation}
M_h^N(x,dy)=\mu_h(x,dy)+c_1h^{-d}1_{| x-y|<c_2h}\,dy
\label{29}
\end{equation}
where for all $x\in\Omega,\ \mu_h(x,dy)$ is a positive Borel measure.
\label{prop22}
\end{prop}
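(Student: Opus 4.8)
The plan is to iterate the Metropolis operator enough times so that, with uniformly positive probability, one can reach any point in a ball of radius comparable to $h$ around the starting point by a chain of moves along the directions of $\E$, using only the parts of those moves that stay inside $\Omega$. The heart of the matter is that $M_h$ itself does not control a ball walk near the boundary (a single move $M_{j,h}$ can have most of its mass at the Dirac point because $x+hte_j$ leaves $\Omega$), so we must use Corollary \ref{cor14}: near every boundary point there is a distinguished direction $\theta_qe_q$ along which short moves stay in $\Omega$. Composing moves along the chain of directions $\theta_1e_{\beta_1},\dots,\theta_ke_{\beta_k}$ furnished by Lemma \ref{lem13} lets us ``peel off'' the active constraints one at a time and land in the interior, from where the remaining $d$ directions (which span $\R^d$) generate a genuinely $d$-dimensional ball of reachable points.

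The steps I would carry out are: (1) Fix $r,\epsilon>0$ as in Corollary \ref{cor14}, and by compactness cover $\overline\Omega$ by finitely many balls $B(x_0,r)$; it suffices to prove the claim with constants depending only on this finite cover. (2) For $x$ in one such ball, write the kernel of $M_h$ as $M_h(x,dy)=m_h(x)\delta_x+\frac1p\sum_j K_{j,h}(x,dy)$ and expand $M_h^N$ as a sum over words of length $N$ in the symbols $\{\mathrm{stay}\}\cup\{K_{1,h},\dots,K_{p,h}\}$; all terms are positive measures, so we may throw away all but one convenient word and still get a lower bound $M_h^N(x,dy)\ge (\text{const})\,\kappa_h(x,dy)$, where $\kappa_h$ is the kernel obtained by composing the $K_{j,h}$ along the direction sequence from Lemma \ref{lem13} (padded with ``stay'' to length $N$, with $N$ chosen larger than $d$ plus the maximal number of facets meeting at a vertex). (3) Show that each composition $K_{j,h}$ restricted to the relevant direction, thanks to \eqref{112}/\eqref{18}, has kernel bounded below by $c\,h^{-1}\mathbf 1_{I_j}(s)\,ds$ along a segment $I_j$ of length $\sim h$ lying inside $\Omega$; composing $d$ linearly independent such one-dimensional smoothing kernels produces, by a change of variables with Jacobian bounded below (here span$(\E)=\R^d$ is used), a kernel $\ge c_1 h^{-d}\mathbf 1_{|x-y|<c_2 h}$ on the ball, while the first few compositions merely move us from $x$ into the interior a distance $\gtrsim h$. (4) Set $\mu_h(x,dy)=M_h^N(x,dy)-c_1h^{-d}\mathbf 1_{|x-y|<c_2h}\,dy$, which is then a positive Borel measure by construction.

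The main obstacle is Step 3 near the boundary: one has to check that after the ``boundary-peeling'' moves the reachable set still contains a full $d$-dimensional ball of radius $\sim h$ centered at a point at distance $\sim h$ from $x$ (not merely a lower-dimensional piece), and that the accumulated density is bounded below by $h^{-d}$ uniformly in $x$ and in which boundary stratum $x$ lies near. This requires tracking how the constraints $H_i^+$, $i\in I$, are successively removed along the sequence \eqref{19}: the move $\theta_n e_{\beta_n}$ being strictly incoming to $H_{i_n}$ and incoming to all later $H_{i_m}$ guarantees that once we have taken a step of size $\sim h$ along it we have gained room $\gtrsim h$ away from $H_{i_n}$ without losing room near the others, so after $k$ steps we are at distance $\gtrsim h$ from every active facet, i.e.\ genuinely in the interior; the remaining $N-k\ge d$ steps along a spanning subfamily then fill out the ball. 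A short compactness/continuity argument handles the uniformity of all constants, completing the proof.
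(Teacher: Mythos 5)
Your outline captures the right high-level strategy — localize by compactness, peel off the active facets along the chain of directions from Lemma~\ref{lem13}, then use the spanning property of $\E$ to generate a $d$-dimensional ball — and it matches the paper's proof up to and including equation~\eqref{220}. But there is a genuine gap in Step~3: after the $k$ ``forward'' boundary-peeling moves you land at a displaced point $x'=x+h\sum_j\theta_j t_j e_{\beta_j}$ at distance $\gtrsim h$ from $x$, and the $d$ interior moves then produce a ball of radius $c_8h$ \emph{centered at $x'$} (your obstacle paragraph says exactly this). The proposition, however, requires a lower bound $c_1 h^{-d}\mathbf 1_{|x-y|<c_2 h}$ centered at $x$. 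These are not the same. The radius $c_8$ is forced to be small enough that $B(x',c_8h)\subset\Omega$ (cf.~\eqref{216}), and the ratio of $c_8$ to the displacement is controlled by the ``incoming strength'' constant $c_5$ in \eqref{215}; when some $\theta_ne_{\beta_n}$ is only barely transverse to $H_{i_n}$, one has $c_8h$ strictly smaller than $|x-x'|$, so $B(x',c_8h)$ need not contain $x$, let alone the portion of $B(x,c_2h)\cap\Omega$ that sits on the boundary side of $x$. Your claim in Step~3 that the composed kernel is $\ge c_1 h^{-d}\mathbf 1_{|x-y|<c_2h}$ is therefore not established; it contradicts the (correct) observation in your obstacle paragraph that the ball is centered at a shifted point.

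The paper closes this gap by using $N=d+2k$ rather than $N\approx d+k$: after the forward peeling and the interior smoothing, it applies $K_{\beta_k,h},\dots,K_{\beta_1,h}$ once more in reverse order (equations \eqref{221}--\eqref{226}), substitutes $z=y-h\sum_j\theta_j s_j e_{\beta_j}$ with $s_j$ in the \emph{same} tiny window $G_j$ as $t_j$, so that the two displacements nearly cancel ($|t_j-s_j|<\delta_j$), and thereby re-centers the reachable set at $x$ while keeping the domain constraints on the intermediate points satisfied via \eqref{216}. This backward un-peeling is the essential mechanism your sketch is missing; without it, the argument only reaches points near the displaced interior point $x'$, not points of $B(x,c_2h)\cap\Omega$ that lie close to the boundary facets one just peeled away from.
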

\begin{proof}
  The proof follows the lines of \cite{pd167}.  Denote
  $K_{h}=\tfrac1{p}\sum_{j=1}^pK_{j,h}$. 
  Since for any $h_2>h_1>0$ and any non-negative function $f$, $h_2K_{h_2}f\geq
h_1K_{h_1}f$,
  it is sufficient to prove the
  following: there exists $h_0>0$, $c_1,c_2>0$ and $N\in\mathbb{N}^*$ such that
  for all $h\in]0,h_0]$, one has, for all non-negative continuous
  functions $f$,
\begin{equation}
K_{h}^{N}(f)(x)\geq c_1h^{-d}\int_{ y\in\Omega, | x-y|\leq c_2h}f(y)\,dy.
\label{210}
\end{equation}
 First note that it is sufficient to
prove the weaker version: for all $x^0\in \overline\Omega$, there
exist $N(x^0),\alpha=\alpha(x^0)>0,\ c_1=c_1(x_0)>0,\ c_2=c_2(x_0)>0,\
h_0=h_0(x_0)>0$ such that for all $h\in]0,h_0]$, all $x\in\Omega$ and
all non-negative functions $f$
\begin{equation}
|x-x^0|\leq2\alpha\Longrightarrow K_{h}^{N(x_0)}(f)(x)\geq c_1h^{-d}\int_{ y\in\Omega,|x-y|\leq c_2h}f(y)\,dy.
\label{211}
\end{equation}
Let us verify that \eqref{211} implies \eqref{210}. Decreasing
$\alpha(x_0)$ if necessary, it may be assumed that
$2\alpha(x_0)<r(x_0)$, where $r(x_0)$ is given by Lemma \ref{lem13}.
Since $\overline\Omega$ is compact, there exists a finite set $F$ such
that $\overline\Omega\subset\cup_{x_0\in F}\{|x-x_0|<\alpha(x_0)\}$.
Let $N=\sup\{N(x_0),\,x_0\in F\}$, $c'_i=\min_{x_0\in F}c_i(x_0)$ and
$h'_0=\min_{x_0\in F}h_0(x_0)$. One has to check that for any $x_0\in
F$ and any $x$ with $|x-x_0|\leq\alpha(x_0)$, the right inequality in
\eqref{211} holds true with $N=N(x_0)+n$ in place of $N(x_0)$ for some
constants $c_1,c_2,h_0$. Moreover, one may assume that $h_0\max\vert e_j\vert\leq \min_{x_0\in F}\alpha(x_0)/N$.

Let $\epsilon>0$, $q\in \{1,\dots, p\}$ and $\theta_q=\pm 1$ be
given by Corollary \ref{cor14}. Then for $\vert x-x_0\vert<(2-\frac 1N)\alpha(x_0)$, one has
\begin{equation}\begin{aligned}
K_h^{N(x_0)+1}f(x)&\geq\dfrac1{p}K_{h,\beta_q}K_h^{N(x_0)}f(x)\geq\dfrac1{p}\int_0^1K_h^{N(x_0)}f(x+ht\theta_qe_{\beta_q})\,dt\\
&\geq c'_1\frac{h^{-d}}p\int_0^{\min(\epsilon,\tfrac{c_2}{2\max_j\vert e_j\vert})}\int_{y\in\Omega,\,|y-x-ht\theta_qe_{\beta_q}|<c_2 h}f(y)\,dydt\\
&\geq c_0'h^{-d}\int_{y\in\Omega,\,|y-x|<c_2h/2}f(y)\,dy
\end{aligned}\label{212}
\end{equation}
since for any $t\in[0, \min(\epsilon,\tfrac{c_2}{2\max_j\vert e_j\vert})]$,
$\{|y-x|<c_2h/2\}\subset\{|y-x-ht\theta_qe_{\beta_q}|<c_2h\}$.
Iterating this computation $n\leq N$ times gives \eqref{210}.

It remains to prove \eqref{211}. If $x_0\in\Omega$, the proof is
obvious. Indeed, since $\E$ spans $\R^d$, it is easy to see that for
any $\delta>0$, there exists $c_3,c_4>0$ such that for any non-negative
function $f$,
\begin{equation}
\dist(y,\partial\Omega)\geq\delta h\Longrightarrow K_h^d(f)(y)\geq c_3h^{-d}\int_{z\in\Omega,\,|y-z|<c_4h}f(z)\,dz\qquad\forall y\in\Omega.
\label{213}
\end{equation}
Suppose that $x_0\in\partial\Omega$ and denote $k=\c(x_0)$. Let
$(i_j)_{1\leq j\leq k}$, $(\beta_j)_{1\leq j\leq k}$,
$(\theta_j)_{1\leq j\leq k}$ be as in Lemma \ref{lem13}. Let
$1=\gamma_1>\gamma_2>\dots>\gamma_k>0$ and $\delta_1,\dots,\delta_k>0$
be such that for all $j$, $\gamma_j-\delta_j>\gamma_{j+1}$. Let
$G_j=[\gamma_j-\delta_j,\gamma_j]$ and $G=\Pi_{j=1}^kG_j$. In the
following computation, $c$ denotes a positive constant independant of
$f$ and $h$ that may change from line to line. Since $f$ is
non-negative,
\begin{equation}
K_h^k(f)(x)\geq p^{-k}K_{\beta_1,h}\dots K_{\beta_k,h}f(x)\geq c\int_{t\in A_h(x)}f(x+h\sum_{j=1}^k\theta_jt_je_{\beta_j})\,dt
\label{214}
\end{equation}
where $A_h(x)=\{t=(t_1,\dots,t_k)\in G,\,\forall
l=1,\dots,k,\,x+h\sum_{j=1}^l\theta_jt_je_{\beta_j}\in\Omega\}$.

Since $\theta_1e_{\beta_1}$ is strictly incoming to $H_{i_1}$,
there exists some constant $c_5,c_6>0$ such that for any $t\in I$,
\begin{equation}\begin{split}
\dist\left(x+h\sum_{j=1}^k\theta_jt_je_{\beta_j},H_{i_1}\right)&\geq c_5ht_1-c_6h(t_2+\dots+t_k)\\
&\geq c_5h(\gamma_1-\delta_1)-c_6h(\gamma_2+\dots+\gamma_k)\\
&\geq c_5h(\gamma_1-\delta_1)/2
\end{split}\label{215}
\end{equation}
by taking $\gamma_2,\dots,\gamma_k$ small with respect to $\gamma_1$.
Similarly, by taking $\gamma_j$ very small with respect to
$\gamma_{j+1}$ for $j=2,\dots,k$, there is $c_7>0$ such that for
any $j=1,\dots,k$,
\begin{equation}
\forall (t_1,\dots, t_j)\in G_1\times\dots\times G_j,\;
\dist\left(x+h\sum_{i=1}^j\theta_it_ie_{\beta_i},\R^d\setminus\overline\Omega\right)\geq c_7h.
\label{216}
\end{equation}
Hence, 
\begin{align}
K_h^kf(x)&\geq c\int_{t\in G} f\left(x+h\sum_{j=1}^k\theta_jt_je_{\beta_j}\right)\,dt\label{217}\\
\intertext{and for any $N\geq 0$}
K_h^{k+N}f(x)&\geq c\int_{t\in G} K_h^N(f)\left(x+h\sum_{j=1}^k\theta_jt_je_{\beta_j}\right)\,dt.\label{218}
\end{align}
Combining \eqref{213}, \eqref{216} and \eqref{218}, there is $c_8>0$ small enough
such that any $y\in\R^d$ such that $|
x+h\sum_{j=1}^kt_je_{\beta_j}-y|<c_8h$ belongs to $\Omega$ and hence
\begin{equation}
K_h^{d+k}f(x)\geq ch^{-d}\int_{t\in G}\int_{| x+h\sum_{j=1}^kt_je_{\beta_j}-y|<c_8h}f(y)\,dydt.
\label{220}
\end{equation}
Since, $K_{h}^kf(y)\geq p^{-k}K_{h,\beta_k}\dots K_{h,\beta_1}f(y)$, then
\begin{equation}
K_h^{d+2k}f(x)\geq ch^{-d}\int_{(t,s,y)\in B_h(x)}f(y-h\sum_{j=1}^ks_je_{\beta_j})\,dtdsdy
\label{221}
\end{equation} 
where 
\begin{multline}
B_h(x)=\left\{(t,s,y)\in G\times G\times\R^d,\;|x+h\sum_{j=1}^k\theta_jt_je_{\beta_j}-y|<c_8h\text{ and}\right.\\
\left.\forall l=1,\dots,k,\,y-h\sum_{j=l}^k\theta_js_je_{\beta_j}\in\Omega\right\}.
\label{222}
\end{multline}
Using the new variable $z=y-h\sum_{j=1}^k\theta_js_je_{\beta_j}$,
\begin{equation}
K_h^{d+2k}f(x)\geq ch^{-d}\int_{(t,s,z)\in D_h(x)}f(z)\,dtdsdz
\label{223}
\end{equation} 
with
\begin{multline}
D_h(x)=\left\{(t,s,z)\in G\times G\times\Omega,\;|x+h\sum_{j=1}^k(t_j-s_j)\theta_je_{\beta_j}-z|<c_8h\text{ and}\right.\\
\left.\forall l=1,\dots,k,\,z+h\sum_{j=1}^{l-1}\theta_js_je_{\beta_j}\in\Omega\right\}.
\label{224}
\end{multline}
Since in the above integral, $|t_j-s_j|<\delta_j$, taking the
$\delta_j$'s small enough gives
\begin{equation}
D_h(x)\supset\left\{(t,s,z)\in G\times G\times\Omega,|x-z|<c_8h/2,\; 
 \forall l=1,\dots,k,\,z+h\sum_{j=1}^{l-1}\theta_js_je_{\beta_j}\in\Omega\right\}.
\label{225}
\end{equation}
Now using \eqref{216}, it follows that 
\begin{equation}
D_h(x)\supset\left\{(t,s,z)\in G\times G\times\Omega,|x-z|<c_8h/2\right\}.
\label{226}
\end{equation}
Combined with \eqref{223}, this yields the announced result.
\end{proof}

Following the strategy of \cite{pd167}, introduce the Dirichlet form
associated to the iterated kernel $M_h^k$:
\begin{equation}
\mathcal{E}_{h,k}(u)=\left\<\left(1-M_h^k\right)u,u\right\>_{L^2(\Omega)}.
\label{227}
\end{equation}
Also, put $\Omega$ in a large box $B=]-A/2,A/2[^d$ and define an
extension map $E:L^2(\Omega)\to L^2(B)$ which is continuous from
$H^1(\Omega)$ into $H^1(B)$ and vanishes far from $\overline\Omega$.  This is possible since $\partial \Omega$
has Lipschitz regularity.  Finally, introduce the Dirichlet form on
$B$:
\begin{equation}
\tilde{\mathcal E}_h(u)=h^{-d}\int_{B\times B,|x-y|<h}|u(x)-u(y)|^2\,dxdy.
\label{228}
\end{equation}
Then Proposition \ref{prop22} easily yields the following (see \cite{pd167} for details).
\begin{lem}
  There exists $C_0,h_0>0$ such that for any $h\in]0,h_0]$ and any
  $u\in L^2(\Omega)$,
\begin{equation}
\tilde{\mathcal E}_h\left(E(u)\right)\leq C_0\left(\mathcal{E}_{h,N}(u)+h^2\|u\|_{L^2(\Omega)}^2\right).
\label{229}
\end{equation}
Moreover, any function $u\in L^2(\Omega)$ such that
\begin{equation*}
\|u\|^2_{L^2(\Omega)}+h^{-2}\left\<(1-M_{h})u,u\right\>_{L^2(\Omega)}\leq1
\end{equation*}
admits a decomposition $u=u_L+u_H$ with $u_L\in H^1(\Omega)$,
$\|u_L\|_{H^1}\leq C_1$, and $\|u_H\|_{L^2}\leq C_1 h$.
\label{lem23}
\end{lem}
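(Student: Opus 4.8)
The plan is to invoke Proposition~\ref{prop22} to replace the iterated kernel $M_h^N$ by a genuine ball-walk kernel, after which both assertions follow as in \cite{pd167}; the only step really special to the discrete-direction setting is an elementary comparison of the Dirichlet forms $\mathcal E_{h,N}$ and $\mathcal E_{h,1}$. First I would observe that $M_h$, hence $M_h^N$, is a self-adjoint Markov kernel reversible for Lebesgue measure on $\Omega$: it preserves constants since $M_h(1)=1$, and it is positivity-preserving since each $K_{j,h}$ is an average of nonnegative quantities and $m_{j,h}=1-K_{j,h}(1)\ge0$. The usual Dirichlet-form identity for reversible chains gives
\[
\mathcal E_{h,N}(u)=\tfrac12\int_\Omega\int_\Omega|u(x)-u(y)|^2\,M_h^N(x,dy)\,dx,
\]
and feeding in the lower bound $M_h^N(x,dy)\ge c_1h^{-d}1_{|x-y|<c_2h}1_{y\in\Omega}\,dy$ of Proposition~\ref{prop22} shows that $\mathcal E_{h,N}(u)$ dominates a fixed multiple of $h^{-d}\int_{\Omega\times\Omega,\,|x-y|<c_2h}|u(x)-u(y)|^2\,dx\,dy$. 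A chaining argument exploiting convexity of $\Omega$ — join $x$ and $y$, at distance $<h$, through $\lceil1/c_2\rceil$ points at mutual distance $<c_2h$ that remain in $\Omega$, and average over small displacements of the intermediate points — shows this last expression is itself comparable to the scale-$h$ ball-walk form of $u$ on $\Omega$. Thus $\mathcal E_{h,N}$ controls the scale-$h$ ball-walk form on $\Omega$.

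To deduce \eqref{229} it then remains to bound $\tilde{\mathcal E}_h(E(u))$, the scale-$h$ ball-walk form of the extension on the box $B$, by the scale-$h$ ball-walk form of $u$ on $\Omega$ plus $h^2\|u\|_{L^2(\Omega)}^2$. This is precisely where the Lipschitz regularity of $\partial\Omega$ is used: $E$ is assembled from a finite partition of unity and local reflections across the Lipschitz graphs describing $\partial\Omega$, each distorting distances by a bounded factor, so that after these local changes of variables a pair $(x,y)\in B\times B$ with $|x-y|<h$ is controlled by pairs in $\Omega\times\Omega$ at scale $\sim h$, the cut-off away from $\overline\Omega$ contributing only the lower-order term $h^2\|u\|_{L^2(\Omega)}^2$. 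This estimate is proved for Lipschitz domains in \cite{pd167}, and since a bounded convex polytope is Lipschitz it applies here verbatim; combined with the previous paragraph it gives \eqref{229}. I expect this extension estimate to be the main obstacle — it is the technical core borrowed from \cite{pd167}, whereas the other ingredients are routine once Proposition~\ref{prop22} is available.

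For the decomposition, the new elementary ingredient is the operator inequality $1-M_h^N\le N(1-M_h)$, valid since $M_h$ is self-adjoint with $\Spec(M_h)\subset[-1,1]$ and $1-t^N\le N(1-t)$ for $t\in[-1,1]$. Hence, if $\|u\|_{L^2(\Omega)}^2+h^{-2}\langle(1-M_h)u,u\rangle_{L^2(\Omega)}\le1$, then $\mathcal E_{h,N}(u)\le N\langle(1-M_h)u,u\rangle_{L^2(\Omega)}\le Nh^2$, so \eqref{229} yields $\tilde{\mathcal E}_h(E(u))\le Ch^2$, while $\|E(u)\|_{L^2(B)}\le C\|u\|_{L^2(\Omega)}\le C$.

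It remains to extract $u_L$ and $u_H$ by Fourier analysis on a torus. Periodize $E(u)$ to $(\R/A\mathbb Z)^d$ — legitimate because $E(u)$ vanishes near $\partial B$, so for small $h$ the ``wraparound'' pairs do not contribute — and let $c_k$, $k\in\mathbb Z^d$, be its Fourier coefficients. A direct computation gives $\tilde{\mathcal E}_h(E(u))=\sum_k|c_k|^2\psi_h(k)$ with $\psi_h(k)=h^{-d}\int_{|z|<h}|1-e^{2\pi ik\cdot z/A}|^2\,dz$, and the substitution $z=hw$ shows $\psi_h(k)\asymp\min(h^2|k|^2,1)$ uniformly in $h$ and $k$. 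Let $u_L$ and $u_H$ be the restrictions to $\Omega$ of the parts of $E(u)$ carried by the frequencies $\{|k|\le1/h\}$ and $\{|k|>1/h\}$ respectively. On the low part, $\psi_h(k)\asymp h^2|k|^2$ gives $\sum_{|k|\le1/h}|c_k|^2|k|^2\le Ch^{-2}\tilde{\mathcal E}_h(E(u))\le C$, which with Parseval ($\sum_k|c_k|^2\asymp\|E(u)\|_{L^2}^2\le C$) yields $\|u_L\|_{H^1(\Omega)}\le C$. On the high part, $\psi_h(k)\asymp1$ gives $\sum_{|k|>1/h}|c_k|^2\le C\tilde{\mathcal E}_h(E(u))\le Ch^2$, hence $\|u_H\|_{L^2(\Omega)}\le Ch$. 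Since $E(u)|_\Omega=u$, this is the desired decomposition.
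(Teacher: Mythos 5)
Your proof is correct and follows essentially the same route the paper intends, namely: use Proposition~\ref{prop22} together with the reversible-chain identity (which is exactly inequality~\eqref{230} in the text) to bound the ball-walk form at scale $c_2h$ by $\mathcal E_{h,N}$, pass to scale $h$ by chaining in the convex set, invoke the Lipschitz extension estimate from~\cite{pd167} to control $\tilde{\mathcal E}_h(E(u))$, use $1-M_h^N\le N(1-M_h)$, and split by Fourier frequency on the torus. The paper compresses all of this into ``Proposition~\ref{prop22} easily yields the following (see~\cite{pd167} for details)''; your write-up supplies precisely the missing details, with no gaps that I can see.
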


We are now in position to prove the first part of Theorem \ref{thm21}.
First, assume that $M_hu=u$. Then, it follows from Proposition
\ref{prop22}, that
\begin{equation}
c_1h^{-d}\int_{\Omega\times\Omega,|x-y|<c_2h}\left(u(x)-u(y)\right)^2\,dxdy\leq\int_{\Omega\times\Omega}\left(u(x)-u(y)\right)^2M_h^N(x,dy)\,dx.
\label{230}
\end{equation}
On the other hand, the right hand side in the above inequality is
equal to $\mathcal{E}_{h,N}(u)$ which is actually equal to zero.
Hence, $u$ is constant and $1$ is a simple eigenvalue.

Using the Markov property of $M_h^N$, positivity of $\mu_h$ and the fact that $\partial\Omega$ has Lipschitz regularity, 
easily yields
\begin{equation}
\|\mu_h\|_{L^\infty\to L^\infty}=\mu_h(\Omega)\leq1-c_1h^{-d}\min_{x\in\overline\Omega}\int_{\Omega}1_{|x-y|<c_2h}\,dy<1-\delta'_0
\label{231}
\end{equation}
for some $\delta'_0>0$ independent of $h$. Working as in the proof of
Theorem 1 in \cite{pd167} shows that there exists
$\delta_0\in]0,\tfrac12[$ such that for any $u\in L^2(\Omega)$ and any
$n\geq N$,
\begin{equation}
\left\<M_h^nu,u\right\>_{L^2(\Omega)}\geq(-1+\delta_0)\|u\|_{L^2(\Omega)}^2.
\label{232}
\end{equation}
Hence, the same holds true for $n=1$ with a possibly different
$\delta_0$.

To show that there is $\delta_0>0$ sufficiently small so that the
spectrum of $M_h$ is discrete in $[1-\delta_0,1]$ it suffices to work
as in the proof of Theorem 4.6 in \cite{pd167}, using again
Proposition \ref{prop22}.

Similarly, the Weyl bound on the number of eigenvalues follows from
Lemma \ref{lem23} as in Lemma 4.8 in \cite{pd167}. This proves Part $i$.

To prove the last part of the theorem, work on the Dirichlet form is
needed. In the following, denote $\mathcal{E}_h=\mathcal{E}_{h,1}$.
Introduce the bilinear form associated with $\mathcal{E}_h$:
\begin{equation}
\mathcal{B}_h(u,v)=\<(1-M_h)u,v\>_{L^2(\Omega)},\;\forall u,v\in L^2(\Omega).
\label{233}
\end{equation}
A standard computation shows that
$\mathcal{B}_h(u,v)=\tfrac1{p}\sum_{j=1}^p\mathcal{B}_{j,h}(u,v)$ with
\begin{equation}
\mathcal{B}_{j,h}(u,v)=
\dfrac1{4h}\int_{x\in\Omega,x+te_j\in\Omega,|t|<h}\left(u(x)-u(x+te_j)\right)\left(\overline{v}(x)-\overline{v}(x+te_j)\right)\,dxdt
\label{234}
\end{equation}
\begin{lem}
  Let $\theta\in C^\infty(\overline\Omega)$ be fixed and let
  $(\varphi_h,r_h)\in H^1(\Omega)\times L^2(\Omega)$ be such that $\|
  r_h\|_{L^2(\Omega)}=O(h)$ and $\varphi_h$ converges weakly in
  $H^1(\Omega)$ to some $\varphi$. Then
\begin{equation}
\lim_{h\to0^+}h^{-2}\mathcal{B}_h(r_h,\theta)=0
\label{235}
\end{equation}
and
\begin{equation}
\lim_{h\to 0^+}h^{-2}\mathcal{B}_h(\varphi_h,\theta)=
\dfrac1{6p}\int_\Omega\left\<\nabla_\E\varphi(x),\overline{\nabla_\E\theta}(x)\right\>_{\C^p}\,dx.
\label{236}
\end{equation}
\label{lem24}
\end{lem}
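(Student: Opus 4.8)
The plan is to expand $\mathcal{B}_h(u,\theta) = \tfrac1p\sum_j \mathcal{B}_{j,h}(u,\theta)$ using the explicit formula \eqref{234}, Taylor expand $\theta$ (which is smooth up to the boundary), and carefully track the domain of integration $\{x\in\Omega,\ x+te_j\in\Omega,\ |t|<h\}$, which is the source of all the subtlety. The key point is to rewrite
\[
\mathcal{B}_{j,h}(u,\theta) = \frac1{4h}\int_{|t|<h}\int_{x\in\Omega_{j,t,h}}\big(u(x)-u(x+te_j)\big)\big(\overline\theta(x)-\overline\theta(x+te_j)\big)\,dx\,dt,
\]
where $\Omega_{j,t,h} = \{x:\ x\in\Omega,\ x+te_j\in\Omega\}$. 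For the smooth factor write $\overline\theta(x)-\overline\theta(x+te_j) = -t\,\overline{\partial_{e_j}\theta}(x) + O(t^2)$, uniformly since $\theta\in C^\infty(\overline\Omega)$. The $O(t^2)$ remainder, after division by $h^2$ and using $|t|<h$, contributes $O(h^{-2}\cdot h^{-1}\cdot h^3)\cdot\|u-u(\cdot+te_j)\|_{L^1}$-type terms, which are controlled by $\|u\|_{L^2}$ (and for \eqref{235}, by $\|r_h\|_{L^2}=O(h)$, giving a vanishing bound).

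First I would handle \eqref{235}: since only an upper bound on $|\mathcal{B}_h(r_h,\theta)|$ is needed, apply Cauchy--Schwarz against the quadratic form, $|\mathcal{B}_h(r_h,\theta)|\le \mathcal{E}_h(r_h)^{1/2}\mathcal{E}_h(\theta)^{1/2}$, and note $\mathcal{E}_h(r_h) = \mathcal{B}_h(r_h,r_h)\le \|1-M_h\|\,\|r_h\|^2 = O(h^2)$ while $\mathcal{E}_h(\theta) = O(h^2)$ because $\theta$ is Lipschitz (the integrand in \eqref{234} is $O(t^2)=O(h^2)$ pointwise and the $x$-domain has bounded measure, while the $t$-integral over $|t|<h$ divided by $4h$ is $O(1)$, so $\mathcal{E}_h(\theta)=O(h^2)$). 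Hence $h^{-2}|\mathcal{B}_h(r_h,\theta)|\le h^{-2}\cdot O(h)\cdot O(h) = O(1)$ — actually $O(h)$ once one notes $\|r_h\|=O(h)$ — and so the limit is $0$. Wait: $h^{-2}\cdot O(h)\cdot O(h)=O(1)$ is not enough, so I sharpen: $\mathcal{E}_h(r_h)^{1/2}=O(h)$ and $h^{-2}\mathcal{E}_h(\theta)^{1/2}=O(h^{-1})$, product $O(h^{-1})\cdot O(h)\cdot$... — the clean route is $h^{-2}|\mathcal{B}_h(r_h,\theta)|\le (h^{-2}\mathcal{E}_h(r_h))^{1/2}(h^{-2}\mathcal{E}_h(\theta))^{1/2}$; the first factor is $O(h^{-1})\cdot O(h) = O(1)$ bounded, the second is $(h^{-2}\cdot O(h^2))^{1/2}=O(1)$, so this only gives boundedness. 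To get $0$ one uses instead that $\|r_h\|_{L^2}=O(h)$ directly in the integral: after the Taylor expansion the main term is $-\tfrac1{4ph}\sum_j\int\int_{|t|<h} t\,(r_h(x)-r_h(x+te_j))\overline{\partial_{e_j}\theta}(x)$, and the $t$-average of the odd factor against the smooth $\partial_{e_j}\theta$ produces an extra factor $h$ after integrating by parts in $t$ or exploiting the near-oddness; combined with $\|r_h\|=O(h)$ this gives $o(h^2)$ overall. This is the step I expect to be fiddly.

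For \eqref{236}: substitute the Taylor expansion of $\theta$ into $\mathcal{B}_{j,h}(\varphi_h,\theta)$, so the leading term is
\[
-\frac1{4h}\int_{|t|<h}\!\! t\int_{x\in\Omega_{j,t,h}}\!\!\big(\varphi_h(x)-\varphi_h(x+te_j)\big)\,\overline{\partial_{e_j}\theta}(x)\,dx\,dt.
\]
Now write $\varphi_h(x)-\varphi_h(x+te_j)=-\int_0^t \partial_{e_j}\varphi_h(x+se_j)\,ds$; plugging in and computing the $t$-integral $\tfrac1{4h}\int_{|t|<h}t\int_0^t\cdots ds\,dt$ produces the constant $\tfrac1{4h}\cdot\tfrac23 h^3/\!\!\cdots$ — carrying the elementary constants through gives exactly the factor $\tfrac1{6}$. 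The remaining issue is replacing the moving domain $\Omega_{j,t,h}$ by $\Omega$ and passing $\varphi_h\rightharpoonup\varphi$ to the limit: since $|t|<h\to0$, the symmetric difference $\Omega\,\triangle\,\Omega_{j,t,h}$ has measure $O(h)$ (a Lipschitz-boundary estimate), so replacing the domain costs $O(h)\cdot\|\partial_{e_j}\varphi_h\|_{L^2}\cdot\|\partial_{e_j}\theta\|_\infty = O(h)$ after division by $h^2$... which again is only $O(h^{-1})$ — the saving grace is the extra power of $t$ and $s$: the contribution of the bad set is $\tfrac1{4h}\int_{|t|<h}|t|\cdot|t|\,dt\cdot O(1)=O(h^2)$, so $h^{-2}\times$ that is $O(1)$; one must instead observe the bad region forces $x$ within $O(h)$ of $\partial\Omega$, and on that strip combine the two factors of $t$ to get $O(h^3)$, hence $o(1)$ after dividing by $h^2$. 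Finally, on the full domain, $h^{-2}\mathcal{B}_{j,h}(\varphi_h,\theta)\to \tfrac16\int_\Omega \partial_{e_j}\varphi\,\overline{\partial_{e_j}\theta}$: the map $w\mapsto \tfrac16\int_\Omega\partial_{e_j}w\,\overline{\partial_{e_j}\theta}$ is a bounded linear functional on $H^1(\Omega)$, so weak convergence $\varphi_h\rightharpoonup\varphi$ gives convergence of $\tfrac16\int_\Omega\partial_{e_j}\varphi_h\,\overline{\partial_{e_j}\theta}\to\tfrac16\int_\Omega\partial_{e_j}\varphi\,\overline{\partial_{e_j}\theta}$ once we have shown $h^{-2}\mathcal{B}_{j,h}(\varphi_h,\theta)$ differs from $\tfrac16\int_\Omega\partial_{e_j}\varphi_h\,\overline{\partial_{e_j}\theta}$ by $o(1)$. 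Summing over $j$ and dividing by $p$ yields \eqref{236}. The \textbf{main obstacle} is the uniform control of the moving domain of integration near $\partial\Omega$: one must argue that although $\Omega\triangle\Omega_{j,t,h}$ has measure only $O(h)$, each occurrence of that bad set carries two small factors (one from the displacement $t$ in the smooth difference, one from $\varphi_h(x)-\varphi_h(x+te_j)$ via its $H^1$-bound and an $L^2$ trace estimate on the boundary strip), which together overcome the $h^{-2}$ normalization.
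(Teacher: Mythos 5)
Your overall plan (Taylor expand $\theta$, compute the $1/6$ constant from the $t$-moments, and control the boundary terms) follows the same strategy as the paper's proof, and your treatment of \eqref{236} is essentially the paper's argument: the paper changes variables to bring $\partial_{e_j}\varphi_h$ to the point $x$, arrives at $\tfrac14\int u^2\,du\,dz=\tfrac16$, and then splits the $x$-integral at a fixed distance $\delta$ from $\partial\Omega$, bounding the strip piece by $C\delta^{1/2}\|\varphi_h\|_{H^1}$ via Cauchy--Schwarz before letting $h\to 0$ and then $\delta\to 0$. Your version tries to show the error is $o(1)$ directly; your claimed $O(h^3)$ for the moving-domain error is actually $O(h^{5/2})$ (you get $h^2$ from the two displacement factors and only $h^{1/2}$, not $h$, from the strip's $L^2$ measure), but this does not affect the conclusion.

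The real gap is in \eqref{235}. You correctly observe that the Cauchy--Schwarz bound $|\mathcal{B}_h(r_h,\theta)|\le \mathcal{E}_h(r_h)^{1/2}\mathcal{E}_h(\theta)^{1/2}$ gives only $O(1)$ after dividing by $h^2$, but your proposed fix --- ``integrating by parts in $t$ or exploiting the near-oddness'' to gain ``an extra factor $h$'' --- is not a valid or precise argument, and you yourself flag it as the step you expect to be fiddly. Integration by parts in $t$ is unavailable (it would require $\partial_{e_j}r_h\in L^2$, but $r_h$ is only $L^2$), and the gain is $h^{1/2}$, not $h$. The paper's mechanism is different and cleaner: by self-adjointness of $1-M_{j,h}$, write $\mathcal{B}_{j,h}(r_h,\theta)=\langle r_h,(1-M_{j,h})\theta\rangle$, so that all the structure falls on the smooth factor. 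Taylor expansion gives $(1-M_{j,h})\theta(x)=\rho_h(x)+O(h^2)$ with $\rho_h(x)=\frac{\partial_{e_j}\theta(x)}{2h}\int_{|t|<h,\,x+te_j\in\Omega}t\,dt$. The decisive observation is that for $x$ at distance greater than $h\max_j|e_j|$ from $\partial\Omega$ the admissible $t$-interval is all of $(-h,h)$ and the integral vanishes by oddness, so $\rho_h$ is supported in a boundary strip of width $O(h)$; together with $\|\rho_h\|_{L^\infty}=O(h)$ this yields $\|\rho_h\|_{L^2}=O(h^{3/2})$, whence $h^{-2}\langle r_h,\rho_h\rangle = O(h^{1/2})$. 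Keeping the difference $r_h(x)-r_h(x+te_j)$, as you do, obscures this cancellation; you would need to split that difference into two terms and run the same strip-support argument on each, and your sketch does not articulate that.
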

\begin{proof} 
To prove \eqref{235}, observe that since $\theta$ is smooth,
\begin{equation}\begin{split}
(1-M_{j,h})\theta(x)&=\dfrac{h^{-1}}2\int_{|t|<h,x+te_j\in\Omega}\left(\theta(x)-\theta(x+te_j)\right)\,dt\\
&=\dfrac{\partial_{e_j}\theta(x)}{2h}\int_{|t|<h,x+te_j\in\Omega}t\,dt+O(h^2).
\end{split}\label{237}
\end{equation}
Denoting 
\begin{equation*}
\rho_{h}(x)=\dfrac{\partial_{e_j}\theta(x)}{2h}\int_{|t|<h,x+te_j\in\Omega}t\,dt
\end{equation*}
observe that
$\supp(\rho_h)\subset\{x\in\Omega,\,d(x,\partial\Omega)<h\}$ and
$\|\rho_h\|_{L^\infty}=O(h)$. Hence $\|\rho_h\|_{L^2}=O(h^{3/2})$ and
since $\|r_h\|_{L^2}=O(h)$, it follows that
\begin{equation}
h^{-2}\mathcal{B}_{j,h}(r_h,\theta)=h^{-2}\<r_h,(1-M_{j,h})\theta\>_{L^2}=\<h^{-1}r_h,h^{-1}\rho_h\>_{L^2}+O(h)=O(h^{1/2})
\label{238}
\end{equation}
which goes to zero as $h$ goes to zero.

To prove \eqref{236} observe that 
\begin{equation}\begin{split}
\theta(x+te_j)-\theta(x)&=t\psi(t,x)\\
\varphi_h(x+te_j)-\varphi_h(x)&=t\int_0^1\partial_{e_j}\varphi_h(x+tze_j)\,dz
\end{split}\label{239}
\end{equation}
with $\psi(t,x)$ smooth and $\psi(0,x)=\partial_{e_j}\theta(x)$. Hence
\begin{align}
h^{-2}\mathcal{B}_{j,h}(\varphi_h,\theta)
&=\dfrac1{4h^3}\int_{x\in\Omega,x+te_j\in\Omega,|t|<h,z\in[0,1]}t^2\partial_{e_j}\varphi_h(x+tze_j)\overline{\psi}(t,x)\,dtdzdx\notag\\
&=\dfrac14\int_{x\in\Omega,x+hue_j\in\Omega,|u|<1,z\in[0,1]}u^2\partial_{e_j}\varphi_h(x+huze_j)\overline{\psi}(hu,x)\,dudzdx\label{240}\\
&=\dfrac14\int_{x-huze_j\in\Omega,x+hu(1-z)e_j\in\Omega,|u|<1,z\in[0,1]}u^2\partial_{e_j}\varphi_h(x)\overline{\psi}(hu,x-huze_j)\,dudzdx.\notag
\end{align}
Taylor expansion of $\psi$ shows that
$\psi(hu,x-huze_j)=\partial_{e_j}\theta(x)+O(h)$. Hence, for any
$\delta>0$ and any $h\in]0,1]$,
\begin{align}
h^{-2}\mathcal{B}_{j,h}(\varphi_h,\theta)&=
\dfrac14\int_{x-huze_j\in\Omega,x+hu(1-z)e_j\in\Omega,|u|<1,z\in[0,1]}u^2\partial_{e_j}\varphi_h(x)\overline{\partial_{e_j}\theta}(x)\,dudzdx+O(h)\notag\\
&=I_{\delta}(h)+J_{\delta}(h)+O(h)\label{241}
\end{align}
with $I_\delta(h)$ equal to the above integral over
$d(x,\partial\Omega)\geq\delta$ and $J_\delta(h)$ the integral over
$d(x,\partial\Omega)<\delta$. Then, by Cauchy--Schwartz,
$|J_\delta(h)|\leq C(\theta)\delta^{1/2}\| \varphi_h\|_{H^1} $. On the other
hand, for any $h\in]0,\delta[$,
\begin{equation}\begin{split}
I_\delta(h)&=\dfrac16\int_{x\in\Omega,d(x,\partial\Omega)>\delta}\partial_{e_j}\varphi_h(x)\overline{\partial_{e_j}\theta}(x)\,dx\\
&=\dfrac16\int_{x\in\Omega}\partial_{e_j}\varphi_h(x)\overline{\partial_{e_j}\theta}(x)\,dx+O\left(\delta^{1/2}\|\varphi_h\|_{H^1}\right).
\end{split}\label{242}
\end{equation}
Given $\epsilon>0$, it is easy to find $\delta>0$ small enough such
that for any $h\in]0,\delta[$, $| J_\delta(h)|<\epsilon$ and $|
I_\delta(h)-\tfrac16\int_{x\in\Omega}\partial_{e_j}\varphi_h(x)\partial_{e_j}\theta(x)dx|<\epsilon$.
Now make $h\to0^+$, $\delta$ being fixed, and use the fact that $\varphi_h$
converges weakly in $H^1$ to get
\begin{equation}
\lim_{h\to 0^+}h^{-2}\mathcal{B}_{j,h}(\varphi_h,\theta)=\dfrac1{6}\int_\Omega\partial_{e_j}\varphi(x)\overline{\partial_{e_j}\theta}(x)\,dx
\label{243}
\end{equation}
and the proof is complete.
\end{proof}

To complete the proof of Theorem \ref{thm21}, denote
$|\Delta_h|=h^{-2}(1-M_h)$. Let $R>0$ be fixed and observe that if
$\nu_h\in[0,R]$ and $f_h\in L^2(\Omega)$ satisfy $| \Delta_h|
f_h=\nu_hf_h$ and $\| f_h\|_{L^2}=1$, then, thanks to Lemma
\ref{lem23}, $f_h$ can be decomposed as $f_h=\varphi_h+r_h$ with
$\|r_h\|_{L^2(\Omega)}=O(h)$ and $\varphi_h$ bounded in $H^1$. Hence
(extracting a subsequence if necessary) it may be assumed that
$\varphi_h$ weakly converges in $H^1$ to a limit $\varphi$ and that $\nu_h$
converges to a limit $\nu$. It now follows from Lemma \ref{lem24} that
for any $\theta\in C^\infty(\overline\Omega)$,
\begin{equation}
\dfrac1{6p}\int_\Omega\left\<\nabla_\E f(x),\nabla_\E\theta(x)\right\>_{\C^p}\,dx=\nu\<\varphi,\theta\>_{L^2}.
\label{244}
\end{equation}
Since $\theta$ is arbitrary, it follows that $(-\Delta_\E-\nu)\varphi=0$ and
$\partial_{n,\E}\varphi_{|\partial\Omega}=0$. In fact, this also proves that
for any $\epsilon>0$ small, there exists $h_\epsilon>0$ such that for $h\in]0,h_\epsilon]$, one has
\begin{equation}
\label{245bis}
\Spec(\vert\Delta_h\vert)\cap[0,R]\subset\cup_j[\nu_j-\epsilon,\nu_j+\epsilon]
\end{equation}
and
\begin{equation}
\sharp\Spec(|\Delta_h|)\cap[\nu_j-\epsilon,\nu_j+\epsilon]\leq m_j
\label{245}
\end{equation}
 In fact, there is equality in \eqref{245}.
The following proof is a simplification of the one in \cite{pd167}.
Proceed by induction on $j$: let $\epsilon>0$, small, be given such
that for $0\leq\nu_j\leq M+1$, the intervals
$I_j^\epsilon=[\nu_j-\epsilon,\nu_j+\epsilon]$ are disjoint. Let
$(\mu^h_j)_{j\geq 0}$ be the increasing sequence of eigenvalues of
$|\Delta_h|$, $\sigma_N=\sum_{j=1}^Nm_j$ and $(e_k)_{k\geq 0}$ an othonormal basis of
eigenfunctions of $-\Delta_\E$ such that for all
$k\in\{1+\sigma_N,\dots,\sigma_{N+1}\}$, one has
$(-\Delta_\E-\nu_{N+1})e_{k}=0$. As 0 is a simple eigenvalue of both
$-\Delta_\E$ and $|\Delta_h|$, clearly $\nu_0=\mu_0=0$ and
$m_0=1=\sharp \Spec(|\Delta_h|)\cap[\nu_0-\epsilon,\nu_0+\epsilon]$.

Suppose that for all $n\leq N$, $m_n=\sharp\Spec(|\Delta_h|)\cap
[\nu_n-\epsilon,\nu_n+\epsilon]$. Then by \eqref{245bis}, for $h\leq
h_\varepsilon$,
\begin{equation}
\mu^h_{1+\sigma_N}\geq\nu_{N+1}-\epsilon.
\label{246}
\end{equation}
By the min-max principle, if $G$ is a finite dimensional subspace of
$H^1$ with $\dim(G)=1+\sigma_{N+1}$,
\begin{equation}
\mu^h_{\sigma_{N+1}}\leq\sup_{\psi\in G,\|\psi\|=1}\<|\Delta_h|\psi,\psi\>_{L^2(\Omega)}.
\label{247}
\end{equation}
Let $G$ be the vector space spanned by the $e_{k},\ 0\leq
k\leq\sigma_{N+1}$. Then, $\dim(G)=1+\sigma_{N+1}$ and it follows from
Lemma \ref{lem24}, for any $k,k'\leq1+\sigma_{N+1}$,
\begin{equation}
\lim_{h\to0^+}h^{-2}\mathcal{B}_h(e_k,e_{k'})=\dfrac1{6p}\int_\Omega\left\<\nabla_\E e_k(x),\nabla_\E e_{k'}(x)\right\>_{\C^p}\,dx.
\label{248}
\end{equation}
Hence 
\begin{equation}
\lim_{h\to0^+}h^{-2}\mathcal{B}_h(\psi,\psi)=\dfrac1{6p}\int_\Omega|\nabla_\E \psi(x|^2\,dx\leq\nu_{N+1}
\label{249}
\end{equation}
for any $\psi\in G$ with $\Vert\psi\Vert_{L^2}=1$. Since $G$ has finite dimension, a standard
compactness argument shows that there exists $h_\epsilon>0$ such that
for any $h\in]0,h_\epsilon]$ and any $\psi\in G$ with $\|
\psi\|_{L^2}\leq 1$,
\begin{equation}
h^{-2}\mathcal{B}_h(\psi,\psi)\leq\nu_{N+1}+\epsilon.
\end{equation}
Therefore $\mu_{\sigma_{N+1}}\leq\nu_{N+1}+\epsilon$. Combining this
with \eqref{246} and \eqref{245} gives
$m_{N+1}=\sharp\Spec(|\Delta_h|)\cap[\nu_{N+1}-\epsilon,\nu_{N+1}+\epsilon]$.
The proof of Theorem \ref{thm21} is complete.

\section{Total Variation Estimates}\label{sec3}

This section gives estimates on the convergence speed of the iterated
kernel $M_h^n(x,dy)$ towards its stationary measure
$\tfrac{dy}{Vol(\Omega)}$. Recall that the total variation
$\|\mu-\nu\|_{TV}$ between two probability measures $\mu$ and $\nu$ on
$\Omega$ is defined by
\begin{equation}
\|\mu-\nu\|_{TV}=\sup|\mu(A)-\nu(A)|
\label{31}
\end{equation}
where the sup is taken over all measurable sets $A$. Equivalently,
\begin{equation}
\|\mu-\nu\|_{TV}=\dfrac12\sup_{f\in L^\infty,\|f\|_{L^\infty}=1}|\mu(f)-\nu(f)|.
\label{32}
\end{equation}
\begin{thm}
  Assume that $\E$ is weakly incoming. Then there exists $C>0$ and
  $h_0>0$ such that for all $h\in]0,h_0]$ and all $n\in\N$, the
  following estimate holds true, with $g(h)$ the spectral gap studied
  in \ref{sec2}:
\begin{equation}
\sup_{x\in\Omega}\left\|M_h^n(x,dy)-\dfrac{dy}{\vol(\Omega)}\right\|_{TV}\leq Ce^{-ng(h)}.
\label{33}
\end{equation}
\label{thm31}\end{thm}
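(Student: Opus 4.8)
The plan is to combine the spectral gap of $M_h$ (Theorem~\ref{thm21}(i), together with $h^{-2}g(h)\to\nu_1>0$) with an ultracontractivity estimate for $M_h$ on the natural time scale $h^{-2}$; the small error factor produced by the latter is absorbed by first running the chain for a burn-in time comparable to $1/g(h)$.

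Write $V=\vol(\Omega)$, let $\pi(dy)=dy/V$ be the stationary measure, let $\Pi f=\frac1V\int_\Omega f\,dx$ be the orthogonal projection of $L^2(\Omega)$ onto the constants, and set $\Phi(n)=\sup_{x\in\Omega}\|M_h^n(x,\cdot)-\pi\|_{TV}$. Since $g(h)\to 0$, for $h\le h_0$ one has $g(h)<\delta_0$, so by Theorem~\ref{thm21}(i) the spectrum of $M_h$ restricted to $(1-\Pi)L^2(\Omega)$ lies in $[-1+\delta_0,1-g(h)]$, whose endpoints have modulus at most $1-g(h)$ (as $g(h)<\delta_0$); since $M_h$ is self-adjoint and $M_h\Pi=\Pi M_h=\Pi$, this yields
\begin{equation}
\|(M_h^{n}-\Pi)u\|_{L^2(\Omega)}\le(1-g(h))^{n}\|u\|_{L^2(\Omega)}\le e^{-ng(h)}\|u\|_{L^2(\Omega)},\qquad u\in L^2(\Omega),\ n\ge0.
\label{eq:plan-gap}
\end{equation}
The second ingredient is a Nash inequality for $M_h$. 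Starting from the comparison $\tilde{\mathcal E}_h(Eu)\le C_0(\mathcal E_{h,N}(u)+h^2\|u\|_{L^2}^2)$ of Lemma~\ref{lem23} and the classical Nash inequality for $\tilde{\mathcal E}_h$ on the box $B$ (which, up to the lower-order term $h^2\|u\|_{L^2}^2$, is the Euclidean Nash inequality for $-\Delta$ on $B$), one derives a Nash inequality for the form $\mathcal E_{h,N}$, and the usual Nash iteration then gives a constant $C_2$ with
\begin{equation}
\|M_h^{m}\|_{L^2(\Omega)\to L^\infty(\Omega)}\le C_2\,(mh^2)^{-d/4}\qquad\text{for all }m_0\le m\le h^{-2},
\label{eq:plan-nash}
\end{equation}
$m_0$ a fixed integer (large enough that $M_h^{m_0}$ maps $L^2$ into $L^\infty$, which is possible because $\E$ spans $\R^d$). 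Fix $N_1=N_1(h)=\lceil 1/g(h)\rceil$; since $h^{-2}g(h)\to\nu_1\in]0,\infty[$ we have $N_1(h)\asymp h^{-2}$, so $m_0\le N_1(h)\le h^{-2}$ for $h\le h_0$ and \eqref{eq:plan-nash} gives $\|M_h^{N_1(h)}\|_{L^2\to L^\infty}\le C_3$ uniformly, while $N_1(h)g(h)$ stays bounded above.

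It remains to assemble the estimate using the dual description \eqref{32} of total variation. Let $n\ge N_1(h)$ and $f$ with $\|f\|_{L^\infty}\le 1$. Using $M_h^{N_1}\Pi=\Pi$, then \eqref{eq:plan-nash} and \eqref{eq:plan-gap},
\begin{equation*}
|M_h^nf(x)-\Pi f|=\big|M_h^{N_1}\big((M_h^{n-N_1}-\Pi)f\big)(x)\big|\le \|M_h^{N_1}\|_{L^2\to L^\infty}\,\big\|(M_h^{n-N_1}-\Pi)f\big\|_{L^2}\le C_3\,e^{-(n-N_1)g(h)}\|f\|_{L^2}.
\end{equation*}
Since $\|f\|_{L^2}\le V^{1/2}$ and $e^{-(n-N_1)g(h)}=e^{N_1(h)g(h)}e^{-ng(h)}\le C_4e^{-ng(h)}$, taking the supremum over such $f$ and over $x\in\Omega$ gives $\Phi(n)\le \tfrac12 C_3C_4V^{1/2}\,e^{-ng(h)}$; for $n<N_1(h)$ one uses the trivial bound $\Phi(n)\le 1\le e^{N_1(h)g(h)}e^{-ng(h)}\le C_4e^{-ng(h)}$. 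In both cases $\Phi(n)\le Ce^{-ng(h)}$, which is \eqref{33}. Note that this argument works directly on $M_h^nf(x)-\Pi f$ and never requires $M_h^n(x,\cdot)$ to be absolutely continuous: the operator $M_h^{N_1}$ plays the role of a smoothing (ultracontractive) operator $L^2\to L^\infty$, the $\Pi$-part of $M_h^{n-N_1}$ reproduces exactly the stationary value $\Pi f$, and the remainder is controlled in $L^2$ by the gap.

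The main obstacle is \eqref{eq:plan-nash}: proving a Nash inequality — hence ultracontractivity on the time scale $h^{-2}$ — for the \emph{singular} operator $M_h$, whose one-step kernel is carried by a finite union of segments. This is precisely where Proposition~\ref{prop22} is used (it replaces $M_h^N$ by an operator dominating a genuine ball-walk kernel $c_1h^{-d}1_{|x-y|<c_2h}\,dy$) together with the Dirichlet-form comparison of Lemma~\ref{lem23}, which transfers the question to the Euclidean Nash inequality on the enclosing box $B$; this is carried out as in \cite{pd167}. The remaining steps — the spectral-gap contraction \eqref{eq:plan-gap} and the burn-in absorption of the factor $e^{N_1(h)g(h)}$ — are routine.
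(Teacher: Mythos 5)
Your assembly of the argument---spectral gap on $L^2$ for the tail, a burn-in window of length $\asymp h^{-2}$ controlled by an $L^2\to L^\infty$ bound, and then the duality description \eqref{32}---is the right general shape, but the key estimate \eqref{eq:plan-nash} is false as stated, and this is not a minor technicality: it is exactly the place where the singular nature of $M_h$ bites.

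Two independent reasons. First, $M_h^m$ does \emph{not} map $L^2(\Omega)$ into $L^\infty(\Omega)$ for \emph{any} fixed $m$. Each step of the chain either stays put (a Dirac at $x$) or moves along a single line $x+\R e_j$; after $m$ steps there is always a positive-probability event on which the chain has moved only along (say) $e_1$, so $M_h^m(x,dy)$ retains a component supported on a one-dimensional affine segment. That singular component cannot be integrated against a general $L^2$ function, so no finite $m_0$ can serve as a ``smoothing time'' in the sense you need; your parenthetical claim that $M_h^{m_0}$ maps $L^2$ into $L^\infty$ because $\E$ spans $\R^d$ is incorrect. Second, the Nash inequality from which you want to iterate,
\begin{equation*}
\|u\|_{L^2}^{2+1/D}\le C\,h^{-2}\bigl(\mathcal E_{h,N}(u)+h^2\|u\|_{L^2}^2\bigr)\|u\|_{L^1}^{1/D},
\end{equation*}
fails on all of $L^2(\Omega)$: take $u=f_\epsilon=\epsilon^{-d}\indic_{|x-x_*|<\epsilon}$ with $\epsilon\ll h$ and $x_*$ interior. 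Since $M_h^N$ is positivity-preserving, $\mathcal E_{h,N}(f_\epsilon)\le\|f_\epsilon\|_{L^2}^2\sim\epsilon^{-d}$ while $\|f_\epsilon\|_{L^1}\sim1$, so the right-hand side is $\lesssim h^{-2}\epsilon^{-d}$, but with $D=d/4$ the left-hand side is $\sim\epsilon^{-d-2}$. The inequality then forces $\epsilon\gtrsim h$. This is precisely why the paper's \eqref{311}--\eqref{312} are stated only for $u\in E_\alpha$, the span of eigenfunctions with $\lambda_{j,h}\le h^{-\alpha}$: there, Lemma \ref{lem23} together with the a priori $L^\infty$ bound \eqref{38} on eigenfunctions near $1$ rules out concentration at scales below $h$, and only then does the Nash/ultracontractivity machinery close.

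Consequently the burn-in step $\|M_h^{N_1}\|_{L^2\to L^\infty}\le C_3$ is unavailable, and your chain of inequalities collapses at that point. The paper's proof repairs exactly this by decomposing $M_h-\Pi_0=M_{h,1}+M_{h,2}+M_{h,3}$ according to the spectrum: $M_{h,1}$ lives on the finite-dimensional $E_\alpha$ where the restricted Nash inequality holds and eigenfunctions satisfy \eqref{38}, giving the sharp $e^{-nNg(h)}$ decay; $M_{h,2}$ and $M_{h,3}$ carry eigenvalues at distance at least $h^{2-\alpha}$ (resp. $\delta_0$) from $1$, and crude bounds---the Weyl count from Theorem \ref{thm21}(ii) and the pointwise eigenfunction bound \eqref{38}, yielding $h^{-3d/2}$---combined with the faster exponential decay are enough once $n\ge h^{-2}$. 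Your proposal would need to incorporate this spectral truncation (or some other device that avoids applying an $L^2\to L^\infty$ estimate to the full singular operator) before it can be made to work.
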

\begin{proof}
  The proof is very close to the proof of Theorem 4.6 in \cite{pd167}
  and is just sketched for the reader's convenience. Observe first
  that $n\geq h^{-2}$ can be assumed, since otherwise the estimate is
  trivial thanks to the lower bound on the spectral gap.

  Let $\Pi_0$ be the othogonal projector in $L^2(\Omega)$ on the
  constant functions. Observe that
\begin{equation}
2\sup_{x\in\Omega}\left\|M_h^n(x,dy)-\dfrac{dy}{\vol(\Omega)}\right\|_{TV}=\left\|M_h^n-\Pi_0\right\|_{L^\infty\to L^\infty}.
\label{34}
\end{equation}
Using the spectral decomposition of $M_h$, let
$0<\lambda_{1,h}\leq\dots\leq\lambda_{j,h}\leq\dots\leq
h^{-2}\delta_0$ be such that the eigenvalues of $M_h$ in the interval
$[1-\delta_0,1]$ are the $1-h^2\lambda_{j,h}$ with associated
orthonormalized eigenfunctions
$M_h(e_{j,h})=(1-h^2\lambda_{j,h})e_{j,h}$.

Then write $M_h-\Pi_0=M_{h,1}+M_{h,2}+M_{h,3}$, so that the operators
$M_{h,1}$, $M_{h,2}$ have kernels
\begin{align}
M_{h,1}(x,y)&=\sum_{\lambda_{1,h}\leq\lambda_{j,h}\leq h^{-\alpha}}(1-h^2\lambda_{j,h})e_{j,h}(x)e_{j,h}(y)\label{35}\\
M_{h,2}(x,y)&=\sum_{h^{-\alpha}\leq\lambda_{j,h}\leq h^{-2}\delta_0}(1-h^2\lambda_{j,h})e_{j,h}(x)e_{j,h}(y)\label{36}
\end{align}
where $\alpha\in]0,2]$ is a small constant that will be chosen later.
Then
\begin{equation}
2\sup_{x\in\Omega}\left\|M_h^n(x,dy)-\dfrac{dy}{\vol(\Omega)}\right\|_{TV}\leq\sum_{j=1}^3\left\|M_{h,j}^n\right\|_{L^\infty\to L^\infty}
\label{37}
\end{equation}
and terms on the right hand side must be estimated.

From \eqref{231}, it is easy to prove that any eigenfunction
$M_h(u)=\lambda u$ with $\lambda\in]1-\delta_0,1]$ satisfies
\begin{equation}
\|u\|_{L^\infty}\leq Ch^{-d/2}\|u\|_{L^2}.
\label{38}
\end{equation}
As in \cite{pd167}, using in particular the bound on the number of eigenvalues, we show 
that for $n\in\N$,
\begin{equation}
\|M^n_{h,2}\|_{L^\infty\to L^\infty}+\|M^n_{h,3}\|_{L^\infty\to L^\infty}\leq C\big{(}(1-h^{2-\alpha})^n+(1-\delta_0)^n\big{)}h^{-3d/2}
\label{39}
\end{equation}
For $n\geq h^{-2}$, this implies that
\begin{equation}
\|M_{h,2}^n\|_{L^\infty\to L^\infty}+\|M_{h,3}^n\|_{L^\infty\to L^\infty}\leq C_\alpha e^{-nh^{2-\alpha}}.
\label{310}
\end{equation}
It remains to estimate $M_{h,1}^n$. Let $E_\alpha$ denote the space
spanned by the eigenvectors $e_{j,h}$ such that $\lambda_{j,h}\leq
h^{-\alpha}$. Then, thanks to Part $ii$ of Theorem \ref{thm21},
$\dim(E_\alpha)\leq h^{-d\alpha/2}$. As in \cite{pd167}, Lemma \ref{lem23} shows that
there exists $\alpha>0$ and $p>2$ such that for any $u\in E_\alpha$,
\begin{equation}
\|u\|_{L^p}^2\leq Ch^{-2}\left(\mathcal{E}_{h,N}(u)+h^2\|u\|_{L^2}^2\right).
\label{311}
\end{equation}
This gives the following Nash estimate, with $\tfrac1{D}=2-\tfrac4{p}>0$:
\begin{equation}
\|u\|_{L^2}^{2+\tfrac1{D}}\leq Ch^{-2}\left(\mathcal{E}_{h,N}(u)+h^2\|u\|^2_{L^2}\right)\|u\|_{L^1}^{\tfrac1{D}}\qquad\forall u\in E_\alpha.
\label{312}
\end{equation}
This inequality allows an estimate of $M_{h,1}$ from $L^1$ into $L^2$
and this leads to $\|M^{kN}_{h,1}\|_{L^\infty\to L^\infty}\leq Ce^{-k
  Ng(h)}$ for $k\geq h^{-2}$.  As $M_h$ is bounded by $1$ on
$L^\infty$ it follows that $kN$ can be replaced by $n\geq h^{-2}$ in
this estimate, and the proof of Theorem \ref{thm31} is complete.
\end{proof}

\section{Convergence of the Resolvants}\label{sec4}

Let us denote $|\Delta_h|=h^{-2}(1-M_h)$. Recall $\Delta_\E$ from
\eqref{21}. This section proves strong resolvent convergence of
$|\Delta_h|$ to $\Delta_\E$. For background and consequences, see
\cite{reedIV}.
\begin{thm}
  Let $z\in\C\setminus[0,+\infty[$ and $g\in L^2(\Omega)$. Then
\begin{equation}
\lim_{h\to 0^+}\left\|(|\Delta_h|-z)^{-1}g-(-\Delta_\E-z)^{-1}g\right\|_{L^2(\Omega)}=0.
\label{41}
\end{equation}
\label{thm41}
\end{thm}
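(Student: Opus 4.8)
The plan is to establish strong resolvent convergence by the standard functional-analytic route: prove weak convergence of resolvents along subsequences together with a compactness argument that pins down the limit, then upgrade weak to strong convergence using convergence of quadratic forms (Mosco-type convergence). Fix $z\in\C\setminus[0,+\infty[$ and $g\in L^2(\Omega)$, and set $u_h=(|\Delta_h|-z)^{-1}g$, so that $(|\Delta_h|-z)u_h=g$. The first step is a uniform bound: since $|\Delta_h|$ is self-adjoint and nonnegative (this follows from $\langle(1-M_h)u,u\rangle\geq0$, which was already used in the excerpt), we get $\|u_h\|_{L^2}\leq\dist(z,[0,\infty))^{-1}\|g\|_{L^2}$ and also $h^{-2}\langle(1-M_h)u_h,u_h\rangle=\langle|\Delta_h|u_h,u_h\rangle$ is bounded. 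Hence $\|u_h\|_{L^2}^2+h^{-2}\langle(1-M_h)u_h,u_h\rangle_{L^2(\Omega)}$ is bounded, and Lemma \ref{lem23} gives a decomposition $u_h=\varphi_h+r_h$ with $\varphi_h$ bounded in $H^1(\Omega)$ and $\|r_h\|_{L^2}=O(h)$.

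\textbf{Identifying the limit.} By Rellich compactness, after passing to a subsequence $\varphi_h\rightharpoonup\varphi$ weakly in $H^1(\Omega)$ and strongly in $L^2(\Omega)$; hence $u_h\to\varphi$ strongly in $L^2$. The goal is to show $\varphi=(-\Delta_\E-z)^{-1}g$, i.e. $\varphi\in D(-\Delta_\E)$ and $(-\Delta_\E-z)\varphi=g$. Test the equation against $\theta\in C^\infty(\overline\Omega)$: $\langle(|\Delta_h|-z)u_h,\theta\rangle_{L^2}=\langle g,\theta\rangle_{L^2}$, which expands to $h^{-2}\mathcal{B}_h(u_h,\theta)-z\langle u_h,\theta\rangle_{L^2}=\langle g,\theta\rangle_{L^2}$. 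Now apply Lemma \ref{lem24}: $h^{-2}\mathcal{B}_h(r_h,\theta)\to0$ and $h^{-2}\mathcal{B}_h(\varphi_h,\theta)\to\frac1{6p}\int_\Omega\langle\nabla_\E\varphi,\overline{\nabla_\E\theta}\rangle_{\C^p}\,dx$, while $\langle u_h,\theta\rangle_{L^2}\to\langle\varphi,\theta\rangle_{L^2}$. Passing to the limit yields
\begin{equation}
\dfrac1{6p}\int_\Omega\left\langle\nabla_\E\varphi(x),\overline{\nabla_\E\theta}(x)\right\rangle_{\C^p}\,dx-z\langle\varphi,\theta\rangle_{L^2}=\langle g,\theta\rangle_{L^2}
\label{eq:weakform}
\end{equation}
for all $\theta\in C^\infty(\overline\Omega)$, hence by density for all $\theta\in H^1(\Omega)$. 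This is precisely the weak formulation characterizing $\varphi=(-\Delta_\E-z)^{-1}g$ via the Dirichlet form $\mathcal{E}_\E$ defined in \eqref{24}: taking $\theta$ compactly supported gives $(-\Delta_\E-z)\varphi=g$ in the distributional sense, so $\Delta_\E\varphi\in L^2$, and the boundary term in the duality pairing \eqref{23} then forces $\partial_{n,\E}\varphi|_{\partial\Omega}=0$, i.e. $\varphi\in D(-\Delta_\E)$. Since the limit is independent of the subsequence, the whole family converges: $u_h\to(-\Delta_\E-z)^{-1}g$ strongly in $L^2(\Omega)$, which is \eqref{41}.

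\textbf{Main obstacle.} The delicate point is that strong $L^2$ convergence of $u_h$ is obtained here for free from the $H^1$ bound on $\varphi_h$ plus Rellich, so the usual difficulty of Mosco convergence (upgrading weak to strong) is bypassed — but this relies crucially on Lemma \ref{lem23}, whose proof in turn rests on the ball-walk comparison of Proposition \ref{prop22}. The real subtlety, already flagged in the discussion around \eqref{27}--\eqref{28} in the excerpt, is that $h^{-2}(1-M_h)\theta$ does \emph{not} converge pointwise to $-\Delta_\E\theta$ up to the boundary, so one cannot test naively; the argument must go through the bilinear form $\mathcal{B}_h$ and Lemma \ref{lem24}, where the boundary layer of width $O(h)$ is shown to contribute negligibly (the $O(\delta^{1/2}\|\varphi_h\|_{H^1})$ terms). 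A secondary point requiring care is justifying that the limiting boundary condition extracted from \eqref{eq:weakform} is exactly the Neumann-type condition $\partial_{n,\E}\varphi|_{\partial\Omega}=0$ in the trace sense of \eqref{22}--\eqref{23}; this is routine given the weak formulation but should be stated explicitly since the domain $D(-\Delta_\E)$ is characterized through that duality. Finally one notes that for real $z<0$ the bound $\dist(z,[0,\infty))^{-1}=|z|^{-1}$ is elementary, and for general $z\in\C\setminus[0,\infty)$ self-adjointness of both operators makes the resolvents uniformly bounded, so no further argument is needed to cover all $z$ in the stated range.
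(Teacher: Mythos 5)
Your proof is correct and follows essentially the same route as the paper: uniform resolvent bound $\Rightarrow$ energy estimate $\Rightarrow$ Lemma \ref{lem23} decomposition $u_h=\varphi_h+r_h$ $\Rightarrow$ weak $H^1$ subsequence limit $\Rightarrow$ pass to the limit in the bilinear form via Lemma \ref{lem24} $\Rightarrow$ identify the limit as $(-\Delta_\E-z)^{-1}g$ and conclude by uniqueness of the subsequential limit. Your explicit invocation of Rellich compactness to upgrade the weak $H^1$ convergence of $\varphi_h$ to strong $L^2$ convergence of $u_h$ makes the final step slightly more transparent than the paper's terse closing sentence, but it is the same argument.
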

\begin{proof}
  Let $z\in\C\setminus[0,+\infty[$ and $g\in L^2(\Omega)$ be fixed.
  For any $h>0$ let $f_h\in L^2(\Omega)$ be the solution of
  $(|\Delta_h|-z)f_h=g$. Hence
\begin{equation}
-z\<f_h,f_h\>_{L^2}+\left\<\dfrac{1-M_h}{h^2}f_h,f_h\right\>_{L^2}=\<g,f_h\>_{L^2}.
\label{42}
\end{equation}
Since $z\notin[0,\infty[$ and $\vert \Delta_h\vert$ is a positive operator, it follows that
$\|f_h\|_{L^2}\leq dist(z,[0,\infty[)^{-1}\|g\|_{L^2}$ is bounded uniformly with
respect to $h$. It follows from the above equation that there exists $C_0>0$ such that
\begin{equation}
\|f_h\|_{L^2}^2+h^{-2}\mathcal{E}_h(f_h)\leq C_0\|g\|_{L^2}^2.
\label{43}
\end{equation}
It now follows from Lemma \ref{lem24} that there exists $C>0$ depending on $z$ and $\Vert g\Vert_{L^2}$ such that
for any $h\in ]0,1]$, we can write $f_h=\varphi_h+r_h$ with $\|
\varphi_h\|_{H^1}\leq C$ and $\|r_h\|_{L^2}\leq Ch$. Let $f\in
H^1(\Omega)$ and $(h_k)_{k\in\N}$ be a sequence of positive numbers
such that $(\varphi_{h_k})_k$ converges weakly to $f$ in $H^1$. Let
$\theta\in C^\infty(\overline\Omega)$ be fixed. Then
\begin{equation}
-z\<f_{h_k},\theta\>_{L^2}+h_k^{-2}\mathcal{B}_{h_k}(f_{h_k},\theta)=\<g,\theta\>_{L^2}
\label{44}
\end{equation}
and taking the limit $k\to\infty$ it follows from Lemma \ref{lem24}
that
\begin{equation}
-z\<f,\theta\>_{L^2}+\dfrac1{6p}\int_\Omega\nabla_\E f(x)\overline{\nabla_\E\theta(x)}\,dx=\int_{\Omega}g(x)\overline{\theta(x)}\,dx.
\label{45}
\end{equation}
Since $\theta$ is arbitrary, this implies $(-\Delta_\E-z)f=g$ and
$\partial_{n,\E}f_{|\partial\Omega}=0$.
Since, this is true for any subsequence $(h_k)$, this shows that $\Vert f_h-f\Vert_{L^2}\rightarrow 0$ when $h\rightarrow 0$, which is exactly \eqref{41}.
\end{proof}

\section{Some Generalizations}\label{sec5}

Here we present a possible generalization of the previous results. It
is still assumed that $\Omega$ is a convex polytope in $\R^d$. Suppose
that $E\subset\R^d$ is endowed with a Borel probability measure $\mu$. For
any $e\in E$, define
\begin{equation}
K_{e,h}f(x)=\dfrac12\int_{t\in [-1,1],x+hte\in\Omega}f(x+hte)\,dt
\label{51}
\end{equation}
and 
\begin{equation}
K_hf(x)=\int_{e\in E}K_{e,h}f(x)\,d\mu(e).
\label{52}
\end{equation}
The associated Metropolis operator is defined by
$M_hf(x)=m_h(x)f(x)+K_hf(x)$ with $m_h(x)=1-K_h(1)$.
\begin{defin}
  Say that $(E,\mu)$ is weakly incoming to $\Omega$ if for any
  $x_0\in\partial\Omega$ there exists $\epsilon>0$,
  $\theta\in\{\pm1\}$ and a measurable subset $F\subset E$ such that
  $\mu(F)>0$ and
\begin{equation}
\c(x_0+\theta te)<\c(x_0)\qquad\forall t\in ]0,\epsilon],\ \forall e\in F.
\label{53}
\end{equation}
\label{def51}
\end{defin}

\begin{lem}\label{lem51} There exists some measurable subsets $F_1,\ldots, F_d\subset E$ such that $\mu(F_j)>0$ for all $j$ and any 
$(f_1,\ldots,f_d)\in\Pi_{j=1}^dF_j$ spans $\R^d$.
Moreover the sets $F_j$ can be chosen with arbitrary small diameters.
\end{lem}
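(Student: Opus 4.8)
The plan is to use a measure-theoretic pigeonhole argument together with the fact, established just after Definition~\ref{def11}, that the weakly incoming condition forces $\text{span}$ of the support of $\mu$ to be all of $\R^d$. First I would observe that $\text{span}(\supp\mu)=\R^d$: indeed, if $\supp\mu$ were contained in a proper subspace $V\subsetneq\R^d$, then every $K_{e,h}$ would move points only in directions parallel to $V$, and the argument given in the text (applied with $V$ in place of $\text{span}(\E)$) would contradict the weakly incoming property. Hence there exist $e_1^0,\dots,e_d^0\in\supp\mu$ which are linearly independent. Equivalently, the determinant function $D(f_1,\dots,f_d)=\det(f_1|\cdots|f_d)$ is nonzero at $(e_1^0,\dots,e_d^0)\in E^d$.

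Next I would exploit continuity of $D$. Since $D$ is a continuous (polynomial) function on $(\R^d)^d$ and $D(e_1^0,\dots,e_d^0)\neq0$, there is $\rho>0$ such that $D$ is bounded away from zero on the product of balls $\prod_{j=1}^d B(e_j^0,\rho)$. Now set $F_j = E\cap B(e_j^0,\rho/2)$ if one wants to control diameters, or more conservatively $F_j=E\cap B(e_j^0,\eta)$ for any prescribed small $\eta\le\rho$; shrinking $\rho$ only helps. Because each $e_j^0$ lies in $\supp\mu$, by definition of the support every open neighborhood of $e_j^0$ has positive $\mu$-measure, so $\mu(F_j)>0$ for each $j$. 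By construction, for every choice $(f_1,\dots,f_d)\in\prod_{j=1}^d F_j$ we have $(f_1,\dots,f_d)\in\prod_j B(e_j^0,\rho)$, hence $D(f_1,\dots,f_d)\neq0$, i.e.\ $(f_1,\dots,f_d)$ spans $\R^d$. The "arbitrary small diameter" claim is then immediate since $\diam(F_j)\le 2\eta$ and $\eta$ may be taken as small as desired (at the possible cost of decreasing the lower bound on $\mu(F_j)$, which is harmless).

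The only genuine point requiring care is the first step, namely deducing $\text{span}(\supp\mu)=\R^d$ from Definition~\ref{def51}: one must check that the obstruction argument following Definition~\ref{def11} goes through verbatim when $\E$ is replaced by a set of directions with positive $\mu$-mass. Concretely, if $\text{span}(\supp\mu)\subset H=(\R\nu)^\bot$, take $x_0\in\partial\Omega$ minimizing $\langle x,\nu\rangle$ over $\overline\Omega$; then $\Omega\subset x_0+H^+$, but the weakly incoming condition supplies a set $F$ with $\mu(F)>0$ and $\theta\in\{\pm1\}$ such that $x_0+\theta te\in\Omega$ for all small $t>0$ and all $e\in F$, and since $F\subset\supp\mu\subset H$ this gives a point of $\Omega$ in $x_0+H$, contradicting $\Omega\subset x_0+H^+$. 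Everything else is routine continuity and the defining property of the support of a measure; I expect no real obstacle beyond writing this first reduction cleanly.
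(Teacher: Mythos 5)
Your proof is correct, and it takes a genuinely different route from the paper. The paper proves the statement by induction on $k$: given $F_1,\dots,F_{k-1}$ with the rank-$(k-1)$ property, it forms $H=\mathrm{span}(f_1,\dots,f_{k-1})$ and chooses $F_k\subset E\setminus H$ of positive measure, iterating up to $k=d$; the small-diameter claim is handled separately by a density-point argument. Your approach instead extracts a basis $e_1^0,\dots,e_d^0$ from $\supp\mu$ all at once and uses the continuity (openness of nonvanishing) of the determinant map on $(\R^d)^d$ to produce product neighborhoods on which linear independence persists, then intersects with $E$ and uses the definition of support to get positive $\mu$-mass. Your version is arguably tighter: the paper's inductive step, as written, fixes $H$ from a single $(k-1)$-tuple, whereas the conclusion must hold for \emph{every} $(f_1,\dots,f_{k-1})\in\prod_{j<k}F_j$; making that uniform over all choices implicitly requires exactly the continuity/small-diameter compactness you make explicit via the determinant. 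Both routes yield the small-diameter refinement essentially for free.

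One small inaccuracy worth repairing in your write-up of the preliminary step: you write ``since $F\subset\supp\mu\subset H$.'' A set $F$ of positive $\mu$-measure need not be contained in $\supp\mu$. What is true, and suffices, is that $\mu(F)=\mu(F\cap\supp\mu)>0$, hence $F\cap\supp\mu\neq\emptyset$; picking any $e\in F\cap\supp\mu\subset H$ then gives the desired point $x_0+\theta t e\in\Omega\cap(x_0+H)$ and the contradiction goes through. With that one-line fix, the argument is complete.
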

\begin{proof} From the same argument as in remark following \ref{def11},  we can easily see that $\mu$ can not be supported in an hyperplane of $\R^d$.
Let us prove by induction that for $k=1,\ldots, d$, there exists $F_1,\ldots F_k\subset E$ such that $\mu(F_j)>0$ for all $j$ and for any 
$(f_1,\ldots,f_k)\in\Pi_{j=1}^kF_j$, $rank(f_1,\ldots, f_k)=k$.

If $k=1$, it suffices to take $F_1\subset F\setminus\{0\}$ with $\mu(F_1)>0$, which is possible thanks to the fact that $F$ is weakly incoming to $\Omega$.

Assume that the property holds true at rank $k-1<d$. There exists  $F_1,\ldots F_{k-1}\subset E$ such that $\mu(F_j)>0$ for all $j$ and any 
$(f_1,\ldots,f_{k-1})\in\Pi_{j=1}^{k-1}F_j$, $H=span(f_1,\ldots, f_{k-1})$ has dimension $k-1$. Since $\supp(\mu)$ is not contained in $H$, there exists $F_k\subset F\setminus H$ with $\mu(F_k)>0$. Then $F_1,\ldots,F_k$ satisfy the property at rank $k$.

The fact that we can take $diam(F_j)$ arbitrary small can be shown as follows. Let $\epsilon>0$ and assume by contradiction that there exists $j_0$ 
such that for any $f\in F_{j_0}$, $\mu(B(f,\epsilon)\cap F_{j_0})=0$. Then any compact subset of $F_{j_0}$ would have measure zero, which is impossible since $\mu(F_{j_0})>0$.
\end{proof}

Introduce the following differential operators associated to the set $E$:
\begin{equation}
\nabla_E:\,H^1(\Omega)\to L^\infty\left(E,L^2(\Omega)\right)
\label{54}
\end{equation}
defined by $\nabla_Eu(e,x)=\<\nabla u(x),e\>_{\C^d}$ for any $(e,x)\in E\times\Omega$;
\begin{equation}
\div_E:\,L^\infty\left(E,H^1(\Omega)\right)\to L^2(\Omega)
\label{55}
\end{equation}
defined by $\div_Ef(x)=\int_E\<\nabla_xf(e,x),e\>_{\C^d}d\mu(e)$ for any $x\in\Omega$; and
\begin{equation}
\Delta_E:\,H^2(\Omega)\to L^2(\Omega)
\label{56}
\end{equation}
given by $\Delta_E=\tfrac16\div_E\nabla_E$.

Define also the following trace operator:
\begin{equation}
\gamma_E^0:\left\{f\in L^\infty\left(E,H^1(\Omega)\right),\,\div_E f\in L^2(\Omega)\right\}\to H^{-\tfrac12}(\partial\Omega)
\label{57}
\end{equation}
by
\begin{equation}
\int_{\partial\Omega}\gamma_E^0f(x)v_{|\partial\Omega}(x)\,d\sigma(x)=\int_\Omega\div_Ef(x)v(x)\,dx+
\int_E\int_\Omega f(e,x)\nabla_Ev(e,x)\,dxd\mu(e)
\label{58}
\end{equation}
for any $v\in H^1(\Omega)$. Observe that if $f\in
L^\infty(E,C^1(\overline\Omega))$, then
\begin{equation}
\gamma_E^0f(x)=\int_E\left\<e,n(x)\right\>_{\C^d}f(x,e)\,d\mu(e)
\label{59}
\end{equation}
where $n(x)$ denotes the unit outgoing normal vector to the boundary
$\partial\Omega$ at point $x$.

For $u\in H^1(\Omega)$ such that $\Delta_Eu\in L^2(\Omega)$, the
function $f=\nabla_Eu$ satisfies $\div_E f\in L^2(\Omega)$. Hence, the
operator
\begin{equation}
\gamma_E^1:\left\{u\in H^1(\Omega),\,\Delta_Eu\in L^2(\Omega)\right\}\to H^{-\tfrac12}(\partial\Omega)
\label{510}
\end{equation}
defined by $\gamma_E^1u(x)=\gamma_E^0\nabla_Eu(x)$ is continuous.

Finally, introduce the following quadratic form on $H^1(\Omega)$:
\begin{equation}
\mathcal{E}_E(u)=\dfrac16\int_E\int_{\Omega}|\nabla_Eu(e,x)|^2\,dxd\mu(e)\qquad\forall u\in H^1(\Omega).
\label{511}
\end{equation}

From  Lemma \ref{lem51}  it follows that, since $E$ is
weakly incoming to $\Omega$, there exists some subsets $F_1,\dots,F_d$
with arbitrary small diameters and $\mu(F_j)>0$ such that any $(f_1,\dots,f_d)\in
F_1\times\dots\times F_d$ spans $\R^d$. Taking the diameter of the
$F_j$ sufficiently small, it is easy to show that there exists $C>0$
such that for any $u\in H^1(\Omega)$,
\begin{equation}
\dfrac1{C}\|\nabla u\|_{L^2(\Omega)}^2\leq\mathcal{E}_E(u)\leq C\|\nabla u\|_{L^2(\Omega)}^2.
\label{512}
\end{equation}

Then, the operator $-\Delta_E=-\tfrac16\div_E\nabla_E$ with domain
$D(-\Delta_E)=\{u\in H^1(\Omega),\; \Delta_E u\in
L^2(\Omega),\,\gamma_E^1u=0\}$ is the self-adjoint realization of the
Dirichlet form $\mathcal{E}_E$.  Moreover, it follows from \eqref{512}
that $-\Delta_E$ has compact resolvant. Denote its spectrum by
$\nu_0=0<\nu_1<\nu_2<\dots$ and by $m_j$ the multiplicity associated
to $\nu_j$. Observe that $m_0=1$.
\begin{thm}
  Suppose that $(E,\mu)$ is weakly incoming to $\Omega$, then the
  following hold true.
\begin{enumerate}
\item[i)] There exists $h_0>0,\ \delta_0\in]0,\tfrac12[$ and a
  positive constant $C$ such that for any $h\in]0,h_0]$, the spectrum
  of $M_h$ is a subset of $[-1+\delta_0,1]$, 1 is a simple eigenvalue
  and $\Spec(M_h)\cap[1-\delta_0,1]$ is discrete.

\item[ii)] For any $h\in]0,h_0]$ and
  $0\leq\lambda\leq\delta_0h^{-2}$, the number of eigenvalues of
  $M_{h}$ in $[1-h^2\lambda,1]$ (with multiplicity) is bounded by
  $C(1+\lambda)^{d/2}$.
  
\item[iii)] For any $R>0$ and $\varepsilon>0$ such that
  $\nu_{j+1}-\nu_j>2\varepsilon$ for $\nu_{j+2}<R$, there exists
  $h_1>0$ such that one has for all $h\in]0,h_1]$,
\begin{equation}
\Spec\left(\frac{1-M_{h}}{h^2}\right)\cap]0,R]\subset\cup_{j\geq 1}[\nu_j-\varepsilon,\nu_j+\varepsilon]
\label{513}
\end{equation}
and the number of eigenvalues of $\frac{1-M_{h}}{h^2}$ in the interval
$[\nu_j-\varepsilon,\nu_j+\varepsilon]$ is equal to $m_j$.
\end{enumerate}
\label{thm52}
\end{thm}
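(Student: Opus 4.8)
The plan is to reduce the continuous-direction setting to the finite one treated in Theorem \ref{thm21} by exploiting Lemma \ref{lem51}. The key point is that in the proof of Theorem \ref{thm21} the finiteness of $\E$ was used in only two essential places: (a) the lower bound on iterates of the kernel (Proposition \ref{prop22}), which needed a single direction per boundary point, and (b) the identification of the limiting Dirichlet form (Lemmas \ref{lem23} and \ref{lem24}). Both survive the passage to $(E,\mu)$ with only cosmetic changes. So first I would re-establish the analogue of Proposition \ref{prop22}: starting from the sets $F_1,\dots,F_d$ of Lemma \ref{lem51}, together with the weak-incoming directions near $\partial\Omega$, one shows there exist $N,c_1,c_2>0$ such that $M_h^N(x,dy)\geq c_1 h^{-d}\indic_{|x-y|<c_2h}\,dy+\mu_h(x,dy)$ with $\mu_h\geq 0$. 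The only difference with the finite case is that the finitely many operators $K_{\beta_j,h}$ are replaced by averages $\int_{F_j}K_{e,h}\,d\mu(e)/\mu(F_j)$ over small sets $F_j$; since any choice of directions in $\prod F_j$ spans $\R^d$, the geometric inequalities \eqref{215}--\eqref{216} hold uniformly for $e_j$ ranging over $F_j$ (diameters taken small enough), and the whole chain of estimates \eqref{210}--\eqref{226} goes through verbatim with $\int_{F_j}\cdots d\mu(e)$ in place of the single-direction integral. The weak-incoming hypothesis furnishes, near each boundary point, a set $F$ of positive measure of admissible directions, which plays the role of the single $\theta_q e_{\beta_q}$ in Corollary \ref{cor14}.

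Next, with this version of Proposition \ref{prop22} in hand, Part i) follows exactly as in Section \ref{sec2}: the pointwise lower bound forces $M_h u=u$ to be constant, yields the bound $\|\mu_h\|_{L^\infty\to L^\infty}<1-\delta_0'$ in \eqref{231}, hence the spectral bound $[-1+\delta_0,1]$ via \eqref{232}, and the discreteness of $\Spec(M_h)\cap[1-\delta_0,1]$. Lemma \ref{lem23} (the comparison of Dirichlet forms $\tilde{\mathcal E}_h(E(u))\lesssim \mathcal E_{h,N}(u)+h^2\|u\|^2$ and the $u=u_L+u_H$ decomposition) also follows formally from the new Proposition \ref{prop22}, as it did in \cite{pd167}, and this immediately gives Part ii), the Weyl bound on the number of eigenvalues in $[1-h^2\lambda,1]$, by the argument of Lemma 4.8 in \cite{pd167}.

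For Part iii) I would prove the continuous-direction analogue of Lemma \ref{lem24}: for $\theta\in C^\infty(\overline\Omega)$, $r_h$ with $\|r_h\|_{L^2}=O(h)$, and $\varphi_h\rightharpoonup\varphi$ weakly in $H^1$, one has $h^{-2}\mathcal B_h(r_h,\theta)\to 0$ and $h^{-2}\mathcal B_h(\varphi_h,\theta)\to \tfrac1{6}\int_E\int_\Omega \nabla_E\varphi(e,x)\overline{\nabla_E\theta(e,x)}\,dx\,d\mu(e)$. Since $\mathcal B_h(u,v)=\int_E \mathcal B_{e,h}(u,v)\,d\mu(e)$ with $\mathcal B_{e,h}$ the obvious one-direction form from \eqref{234}, this is obtained by running the computation \eqref{237}--\eqref{243} for each fixed $e$ and then integrating in $e$ against $\mu$; dominated convergence applies because all the error terms are $O(h^{1/2})$ uniformly in $e\in E$ (the set $E$ may be assumed bounded, or one truncates, since directions of very large norm contribute boundary layers that vanish). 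Once this is available, the weak-limit argument following \eqref{244} shows every limit point of an eigenpair $(\nu_h,f_h)$ with $\nu_h\in[0,R]$ solves $(-\Delta_E-\nu)\varphi=0$, $\gamma_E^1\varphi=0$, giving the inclusion \eqref{513} and the upper bound $\sharp\Spec(|\Delta_h|)\cap[\nu_j-\varepsilon,\nu_j+\varepsilon]\leq m_j$; the matching lower bound is the min--max/induction argument of \eqref{246}--\eqref{249} with the space $G$ spanned by eigenfunctions of $-\Delta_E$, using \eqref{512} in place of \eqref{25}.

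The main obstacle is the uniformity in $e\in E$ throughout Part iii) and in the geometric estimates of the new Proposition \ref{prop22}: one must check that the constants in \eqref{215}--\eqref{216} and the $O(h)$, $O(h^{1/2})$ remainders in the Lemma \ref{lem24} computation can be taken independent of the direction, which is where the "arbitrarily small diameter" clause of Lemma \ref{lem51} and a mild control on the support of $\mu$ (boundedness, or an integrability tail bound) are used. Everything else is a routine transcription of Sections \ref{sec2}--\ref{sec4}.
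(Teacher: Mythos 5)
Your plan is essentially the paper's proof: you replace the single directions $e_{\beta_j}$ by averages over positive-measure sets $F_j$ of small diameter (what the paper formalizes in its continuous analogues of Lemma~\ref{lem13} and Corollary~\ref{cor14}, namely Proposition~\ref{prop53} and Corollary~\ref{cor54}), use the resulting version of Proposition~\ref{prop22} to obtain Parts i)--ii), and run the Dirichlet-form limit of Lemma~\ref{lem24} direction-by-direction and integrate against $\mu$ to get Part iii). The paper does exactly this, so no meaningful divergence.
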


Here are two examples of $(E,\mu)$ which are weakly incoming to
$\Omega$. The first is the case where $E=\{e_1,\dots,e_p\}$ is
discrete and $\mu$ is simply the measure
$\tfrac1{p}\sum_{j=1}^p\delta_{e=e_j}$. Then it suffices to assume
that $E$ is weakly incoming to $\Omega$ in the sense of \ref{def11}.
Moreover, in that case the conclusion of Theorem \ref{thm52} are
exactly those of Theorem \ref{thm21}.

A second example is the following. Let $E$ be equal to the sphere
$S^{d-1}$ and $\mu=d\sigma_d$ be the surface measure. Assume that
$\rho:S^{d-1}\to\R^+$ is a continuous function such that
$\int_{S^{d-1}}\rho(\omega)d\sigma_d(\omega)=1$ and let
$\mu=\rho(\omega)d\sigma_d(\omega)$. Then $(E,\mu)$ will be weakly
incoming to $\Omega$ iff there exists a family of vectors
$e_1,\dots,e_p\in\supp(\rho)$ such that $(e_1,\dots,e_p)$ is weakly
incoming in the sense of  \ref{def11}. For instance, if
$\rho$ is strictly positive on $S^{d-1}$ then these assumptions are
automatically satisfied.

The proof of Theorem \ref{thm52} is very close to that of Theorem
\ref{thm21} and only the main steps are given.  The following
proposition is a version of Lemma \ref{lem13} adapted  to the present setting.
\begin{prop}
  Assume that $(E,\mu)$ is weakly incoming to $\Omega$, let $x_0\in\overline\Omega$ and denote $k=\c(x_0)$. There exists
  $\epsilon>0$ and some subsets $F_1,\dots,F_k\subset E$ such that
  $\mu(F_i)>0$ for all $i=1,\dots,k$ and
\begin{itemize}
\item there exists $r_0>0$ and
  $I\subset\{1,\dots,m\}$ with $\sharp I=k$ such that
\begin{equation}
\Omega\cap B(x_0,r_0)=\left(\cap_{i\in I}^kH_{i}^+\right)\cap B(x_0,r_0);
\label{514}
\end{equation}
\item there exists
  $\theta_1,\dots,\theta_k\in\{\pm1\}$  and a bijection $\{1,\ldots,k\}\ni n\mapsto i_n\in I$ such that for any $n=1,\dots,k$ and any
  $f_n\in F_{n}$,
\begin{equation}
\theta_nf_n\text{ is strictly incoming to }H_{i_n}
\label{515}
\end{equation}
and
\begin{equation}
\theta_nf_n\text{ is incoming to }H_{i_m}\qquad\forall m>n.
\label{516}
\end{equation}
\end{itemize}
Moreover the sets $F_1,\dots,F_k$ can be chosen with arbitrary small
diameter.
\label{prop53}
\end{prop}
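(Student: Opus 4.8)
The plan is to mimic the proof of Lemma~\ref{lem13}, proceeding by induction on $k=\c(x_0)$, but replacing each single direction $\theta_n e_{\beta_n}$ by a whole positive-measure subset $F_n\subset E$ on which the relevant incoming/outgoing conditions hold simultaneously and with uniform strictness. The case $k=0$ is vacuous. For the inductive step, fix $x_0\in\partial\Omega$ with $\c(x_0)=k$; by definition of $\Omega$ there is $r_0>0$ and $I\subset\{1,\dots,m\}$ with $\sharp I=k$ satisfying \eqref{514}. Since $(E,\mu)$ is weakly incoming, there is $\epsilon>0$, $\theta_1\in\{\pm1\}$ and a measurable $\tilde F\subset E$ with $\mu(\tilde F)>0$ such that $\c(x_0+\theta_1 te)<\c(x_0)$ for all $t\in]0,\epsilon]$ and all $e\in\tilde F$. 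The key point is that, for such an $e$, the condition $\c(x_0+\theta_1 te)<\c(x_0)$ forces $\theta_1 e$ to be incoming to every $H_i$, $i\in I$, and strictly incoming to at least one of them.

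The new ingredient compared with Lemma~\ref{lem13} is that I must pass to a sub-subset on which the ``strictly incoming'' index is the \emph{same} for all elements, and on which the scalar products $\<\theta_1 e,\nu_i\>$ for $i\in I$ are uniformly bounded below or vanish. To do this I would partition $\tilde F$ according to the subset $J(e)=\{i\in I:\<\theta_1 e,\nu_i\>>0\}\neq\emptyset$; one of these finitely many pieces has positive measure, so I may assume $J(e)=J$ is constant on $\tilde F$. Pick $i_1\in J$; shrinking $\tilde F$ to $\{e\in\tilde F:\<\theta_1 e,\nu_{i_1}\>>\eta\}$ for a small rational $\eta>0$ (again finitely/countably many pieces cover $\tilde F$, so one has positive measure) gives a set $F_1$ on which $\theta_1 f_1$ is uniformly strictly incoming to $H_{i_1}$ and incoming to all $H_i$, $i\in I$. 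By the remark preceding \eqref{515}, on the complementary indices $i\in I':=I\setminus\{i_1\}$ — more precisely on $I\setminus J$ — the vectors $\theta_1 f_1$ are parallel to $H_i$; actually I only need them incoming to the $H_{i_m}$ for $m>1$, which is automatic since all indices of $I$ are incoming. Then, exactly as in Lemma~\ref{lem13}, one picks $x_0'$ near $x_0$ with $\c(x_0')=k-\sharp J$ realizing $\cap_{i\in I\setminus J}H_i^+$ locally, and applies the induction hypothesis to get $F_2,\dots,F_k$ and the remaining $\theta_n$'s and the bijection onto $I\setminus J$; concatenating with $i_1$ (and the other elements of $J$, handled one at a time in the same way, or folded into the induction) yields the full bijection $n\mapsto i_n$ onto $I$ and \eqref{515}--\eqref{516}.

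The ``arbitrary small diameter'' clause is then obtained for free by invoking the last paragraph of the proof of Lemma~\ref{lem51}: since $\mu(F_n)>0$, for any $\delta>0$ there is $f\in F_n$ with $\mu(B(f,\delta)\cap F_n)>0$, and $B(f,\delta)\cap F_n$ still satisfies \eqref{515}--\eqref{516} because those are open conditions stable under shrinking (strict inequalities stay strict, and ``incoming'' $\<\cdot,\nu\>\ge0$ is preserved if we only shrink around an $f$ that already satisfies it — here one should shrink around a point where the non-strict inequalities are in fact strict or zero, which is why the partition argument above was set up with the index sets $J(e)$). Uniformity of $r_0$ and $\epsilon$ in $x_0$ is not claimed here (unlike Corollary~\ref{cor14}), so no compactness argument is needed at this stage.

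The main obstacle is the measurability and partitioning bookkeeping: one must ensure at each stage that after intersecting $\tilde F$ with the countably many sets $\{e:J(e)=J\}$ and $\{e:\<\theta_1 e,\nu_{i_1}\>>\eta\}$ one retains positive measure, and that the resulting $F_n$ can still be shrunk in diameter without destroying the (now uniform, hence open and stable) incoming conditions. Keeping track of how the ``parallel'' directions of $f_1$ (the indices $I\setminus J$) feed into the induction hypothesis — so that the final map $n\mapsto i_n$ is a genuine bijection onto all of $I$ — is the only real subtlety; everything else is a direct transcription of Lemma~\ref{lem13}.
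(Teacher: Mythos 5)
Your proof is correct and follows essentially the same induction-on-$\c(x_0)$ as the paper's: peel off the indices of $I$ to which a positive-measure set of directions is strictly incoming, apply the induction hypothesis at a shifted base point $x_0'=x_0+\epsilon\theta_1 f^0$, and get the small-diameter clause at the end. The paper avoids your partition-by-$J(e)$ and thresholding bookkeeping by instead choosing a single point $f^0\in F$ with $\mu(B(f^0,\rho)\cap F)>0$ for every $\rho>0$, letting $J$ be the strictly incoming indices of $f^0$, and taking $F_1=\dots=F_{\sharp J}=B(f^0,\rho)\cap F$ for $\rho$ small (openness of the strict condition), which handles all of $J$ at once and renders the ``one at a time, or folded into the induction'' step you flag unnecessary.
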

\begin{proof}
  First, it is clear that \eqref{514} holds true.  We prove \eqref{515} and \eqref{516} by induction on $k=\c(x_0)$. For $k=0$ there is nothing to prove.

Assume now that the property holds true for all $x_0'$ 
such that $\c(x_0')\leq k-1$ and suppose that $\c(x_0)=k$. Since
$(E,\mu)$ is weakly incoming to $\Omega$, there exists $F\subset E$,
$\theta_1\in\{\pm1\}$ and $\epsilon>0$ such that $\c(x_0+t\theta_1
f)<\c(x_0)$ for all $t\in]0,\epsilon]$. Assume without loss of
generality that $\theta_1=1$.  Since $\mu(F)>0$, there exists $f^0\in
F$ such that for all $\rho>0$, $\mu(B(f^0,\rho)\cap F)>0$ and
\begin{equation}
\c(x_0+tf)<\c(x_0)\qquad\forall f\in B(f^0,\rho)\cap F,\ \forall t\in]0,\epsilon].
\label{519}
\end{equation}
In particular, there exists $q_1\in\{1,\dots,k\}$ and $i_1,\ldots ,i_{q_1}\in I$  such that
\begin{equation}
f^0\text{ is strictly incoming to }H_{i_q}\qquad\forall q=1,\dots,q_1
\label{520}
\end{equation}
and 
\begin{equation}
f^0\text{ is parallel to }H_{i_q}\qquad\forall q\geq q_1+1.
\label{521}
\end{equation}
Let $F_q=B(f_0,\rho)\cap F$ with $\rho>0$ for $q=1,\ldots ,q_1$. Then $\mu(F_q)>0$ and it follows from \eqref{520}
that for $\rho$ small enough, any $f\in F_q$ is strictly incoming to $H_{i_q}$. Moreover, thanks to \eqref{519}, any $f\in F_q$ is incoming to $H_{i}$ for $i\in I\setminus\{i_1,\ldots, i_{q_1}\}$.
Then we can use the induction hypothesis with  $x'_0=x_0+\epsilon f_0$ close to $x_0$ such that $\c(x'_0)=k-q_1<k$ to build $F_{q_1+1},\ldots, F_k$.
The statement concerning the diameter of the $F_j$ is a trivial
consequence of the construction.
\end{proof}
\begin{cor}
  Assume that $(E,\mu)$ is weakly incoming to $\Omega$ and let
  $x_0\in\overline\Omega$. Then there exists $r_0>0$, $\epsilon>0$,
  $F\subset E$ with $\mu(F)>0$ and $\theta\in\{\pm1\}$ such that
\begin{equation}
\forall f\in F,\ \forall x\in
B(x_0,r_0)\cap\Omega,\ \forall t\in[0,\epsilon],\;x+t\theta f\in\Omega
\label{526}
\end{equation}
\label{cor54}
\end{cor}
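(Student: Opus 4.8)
The plan is to deduce this from Proposition \ref{prop53} in essentially the same way Corollary \ref{cor14} was deduced from Lemma \ref{lem13}. First I would dispose of the trivial case $x_0\in\Omega$: here $\c(x_0)=0$, so by openness of $\Omega$ one may pick any $f^0\in E$ with $\mu(B(f^0,\rho)\cap E)>0$ for all $\rho>0$ (possible since $\mu$ is a probability measure and, as noted in Lemma \ref{lem51}, is not supported in a hyperplane, in particular $E\neq\{0\}$), set $F=B(f^0,\rho)\cap E$, $\theta=1$, and choose $r_0,\epsilon>0$ small enough that $B(x_0,r_0)+[0,\epsilon]\theta F\subset\Omega$; this uses only that $E$ is bounded near $f^0$ and that $\dist(x_0,\partial\Omega)>0$.

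For the main case $x_0\in\partial\Omega$, apply Proposition \ref{prop53} with this $x_0$, obtaining $r_0>0$, an index set $I$ with $\sharp I=k=\c(x_0)$ satisfying \eqref{514}, sets $F_1,\dots,F_k$ of small diameter with $\mu(F_i)>0$, signs $\theta_1,\dots,\theta_k$, and the bijection $n\mapsto i_n$ with the incoming/strictly-incoming properties \eqref{515}--\eqref{516}. Take $F=F_1$ and $\theta=\theta_1$. The point of \eqref{515}--\eqref{516} for $n=1$ is precisely that $\theta_1 f$ is strictly incoming to $H_{i_1}$ and incoming to every other $H_{i_m}$, $m>1$, hence to every $H_i$, $i\in I$; that is, $\langle\nu_i,\theta_1 f\rangle\ge 0$ for all $i\in I$ and all $f\in F_1$. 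Now, exactly as in \eqref{115}, for $x\in B(x_0,r_0/2)\cap\Omega$ and $t>0$ small, each coordinate $\ell_i(x+t\theta_1 f)-b_i=\bigl(\ell_i(x)-b_i\bigr)+t\|\ell_i\|\langle\nu_i,\theta_1 f\rangle\ge \ell_i(x)-b_i>0$ stays positive for $i\in I$, while for $j\notin I$ the constraint $\ell_j>b_j$ is satisfied on a whole neighbourhood of $x_0$ by \eqref{514}; shrinking $r_0$ and choosing $\epsilon>0$ small (uniformly, since $F_1$ is bounded, $\overline\Omega$ compact, and the $\ell_i$ finitely many) gives $x+t\theta f\in\Omega$ for all $f\in F$, all $x\in B(x_0,r_0)\cap\Omega$, all $t\in[0,\epsilon]$, which is \eqref{526}.

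There is no serious obstacle here: all the geometric work has already been done in Proposition \ref{prop53}, and the only thing to watch is that the finitely many bookkeeping parameters $r_0,\epsilon$ can be taken with the right uniformity. Boundedness of $F$ (guaranteed by the "small diameter" clause, or simply by intersecting with a bounded ball) together with finiteness of the family $\{\ell_j\}$ makes this routine, just as in the proof of Corollary \ref{cor14}.
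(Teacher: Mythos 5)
Your proof is correct and takes exactly the route the paper intends: the paper states Corollary \ref{cor54} without proof, relying on the reader to mimic the proof of the discrete analogue Corollary \ref{cor14}, and your argument is precisely that adaptation, with $F=F_1$ and $\theta=\theta_1$ from Proposition \ref{prop53} playing the role of $e_{\beta_1}$ and $\theta_1$ from Lemma \ref{lem13}, and the same implication as \eqref{115} closing the argument. The one point worth stating slightly more carefully than you do is that the constraints $\ell_j>b_j$ for $j\notin I$ hold on a full ball around $x_0$ because $\c(x_0)=\sharp I$ means those $\ell_j$ are strictly above $b_j$ at $x_0$, so after shrinking $r_0$ and $\epsilon$ (using that $F_1$ has small diameter, hence is bounded) the perturbed points $x+t\theta f$ stay in that ball; you note this uniformity in passing, and it is indeed routine.
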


Using these results and working as in \ref{sec2} easily proves the
following.
\begin{prop}
There exists $N\in\N$ and $c_1,c_2>0$ such that
\begin{equation}
M_h^N(x,dy)=\mu_h(x,dy)+c_1h^{-d}1_{| x-y|<c_2h}\,dy
\label{527}
\end{equation}
where for all $x\in\Omega$, $\mu_h(x,dy)$ is a positive Borel measure.
\label{prop55}
\end{prop}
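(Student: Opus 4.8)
The plan is to mimic the proof of Proposition \ref{prop22} step by step, replacing everywhere the finite family $\E=\{e_1,\dots,e_p\}$ by measurable sets $F_1,\dots,F_k\subset E$ of positive $\mu$-measure, and replacing each single operator $K_{j,h}$ by an averaged operator of the form $\mu(F)^{-1}\int_F K_{e,h}\,d\mu(e)$, which is still a positive operator dominated by $K_h$ up to a multiplicative constant (since $K_h f(x)=\int_E K_{e,h}f(x)\,d\mu(e)\geq \int_F K_{e,h}f(x)\,d\mu(e)$ for non-negative $f$). As in the proof of Proposition \ref{prop22}, the monotonicity $h_2 K_{h_2}f\geq h_1 K_{h_1}f$ for non-negative $f$ reduces the claim to showing that, for some $N$, $c_1$, $c_2>0$ and all small $h$, $K_h^N f(x)\geq c_1 h^{-d}\int_{y\in\Omega,\,|x-y|\leq c_2 h} f(y)\,dy$ for all non-negative continuous $f$, and by the same compactness argument it suffices to prove the local version of this estimate near an arbitrary point $x_0\in\overline\Omega$.

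The interior case $x_0\in\Omega$ is handled exactly as in \eqref{213}, using that, by Lemma \ref{lem51}, there are sets $F_1',\dots,F_d'$ of positive measure any selection from which spans $\R^d$; averaging $K_{e,h}$ over these sets and composing $d$ of them produces, for points at distance $\geq \delta h$ from $\partial\Omega$, a kernel bounded below by $c\,h^{-d}$ on a ball of radius $\sim h$. For the boundary case $x_0\in\partial\Omega$ with $k=\c(x_0)$, apply Proposition \ref{prop53} to obtain $\epsilon>0$, a partition datum $I\subset\{1,\dots,m\}$ with $\sharp I=k$, signs $\theta_1,\dots,\theta_k$, a bijection $n\mapsto i_n$, and sets $F_1,\dots,F_k$ of positive measure and arbitrarily small diameter such that $\theta_n f_n$ is strictly incoming to $H_{i_n}$ and incoming to $H_{i_m}$ for $m>n$, for every $f_n\in F_n$. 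Then run verbatim the chain of inequalities \eqref{214}–\eqref{226}: compose the averaged operators $\mu(F_n)^{-1}\int_{F_n} K_{f,h}\,d\mu(f)$ for $n=1,\dots,k$ with parameters $t_n$ in well-separated intervals $G_n=[\gamma_n-\delta_n,\gamma_n]$, use the strict-incoming conditions to keep the successive iterates at distance $\gtrsim h$ from each $H_{i_n}$ and hence inside $\Omega$ (this is where the small diameter of $F_n$ matters: it guarantees the incoming/strictly-incoming signs are uniform over $f\in F_n$, so the distance estimates \eqref{215}–\eqref{216} survive with $f$ ranging over $F_n$ rather than a single vector), then add $d$ further interior steps to spread mass over a full $h$-ball, and finally run the "out-and-back" argument \eqref{220}–\eqref{226}, composing the $F_n$-averaged operators in reverse order, to symmetrize and conclude $K_h^{d+2k}f(x)\geq c\,h^{-d}\int_{|x-z|<c_8 h/2} f(z)\,dz$.

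The main obstacle, and the only genuinely new point compared with Section~\ref{sec2}, is bookkeeping the measure-theoretic replacement cleanly: one must check that integrating $K_{e,h}$ over $f\in F_n$ (rather than evaluating at a fixed $e_{\beta_n}$) does not destroy the pointwise lower bounds, which requires the diameters of the $F_n$ to be small enough that all the strict-incoming/incoming inequalities hold with a uniform constant over $f\in F_n$; this is exactly what the last sentences of Lemma \ref{lem51} and Proposition \ref{prop53} provide. Everything else — the monotonicity reduction, the compactness patching, and the Fubini manipulations in the change of variables $z=y-h\sum \theta_j s_j e_{\beta_j}$ replaced by $z=y-h\sum\theta_j s_j f_j$ with an extra integration over the $f_j\in F_j$ — is routine and identical in structure to the proof of Proposition \ref{prop22}. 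Hence the statement follows.
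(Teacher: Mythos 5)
Your proposal is correct and follows the same route as the paper: the paper's proof likewise reduces to the observation that $K_h^k f \geq \int_{F_1}\cdots\int_{F_k} K_{h,e_1}\cdots K_{h,e_k}f\,d\mu(e_k)\cdots d\mu(e_1)$ for non-negative $f$, invokes Lemma \ref{lem51} to reestablish \eqref{213}, and then repeats the chain \eqref{214}--\eqref{226} with the sets $F_j$ of Proposition \ref{prop53}, relying on their arbitrarily small diameter exactly as you do. You also correctly pinpoint the only nontrivial new point, namely that the small diameters are what make the quantitative strict-incoming estimates \eqref{215}--\eqref{216} and the change-of-variables step uniform over $f_n\in F_n$.
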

\begin{proof}
  The starting point of the proof is to observe that for any $k\in\N$
  and any non-negative function $f$,
\begin{equation}
K_h^kf(x)\geq\int_{e_1\in F_{1}}\dots\int_{e_k\in F_{k}}K_{h,e_1}\dots K_{h,e_k}f(x)\,d\mu(e_k)\dots d\mu(e_1)
\label{528}
\end{equation}
for any $F_1,\dots,F_k\subset E$.  Then the proof is the same as the
proof of Proposition \ref{prop22}.
In fact, \eqref{213} remains valid thanks to Lemma \ref{lem51}. Then we can mimick the end of the proof, using the fact that in Proposition
\ref{prop53} the set $F_j$ can be chosen with arbitrary small diameter. Details are left to the reader.
\end{proof}

Proposition \ref{prop55} implies a lemma analogous to Lemma
\ref{lem23} for the operator $M_h$ considered in this section. In
particular, any function $u\in L^2(\Omega)$ satisfying
\begin{equation}
\|u\|_{L^2}^2+\left\<(1-M_h)u,u\right\>_{L^2}\leq1
\label{529}
\end{equation}
admits a decomposition $u=u_L+u_H$ with $\|u_L\|_{H^1}\leq1$ and
$\|u_H\|_{L^2}=O(h)$.  Using Proposition \ref{prop53} and the
generalization of Lemma \ref{lem23} gives Parts $i$ and $ii$ of
Theorem \ref{thm52}.

Part $iii$ is implied by the following lemma (where $\mathcal{B}_h$
still denotes the Dirichet form associated to $M_h$).
\begin{lem}
  Let $\theta\in C^\infty(\overline\Omega)$ be fixed and let
  $(\varphi_h,r_h)\in H^1(\Omega)\times L^2(\Omega)$ be such that
  $\|r_h\|_{L^2(\Omega)}=O(h)$ and $\varphi_h$ converges weakly in
  $H^1(\Omega)$ to some $\varphi$. Then
\begin{equation}
\lim_{h\to0^+}h^{-2}\mathcal{B}_h(r_h,\theta)=0
\label{530}
\end{equation}
and
\begin{equation}
\lim_{h\to0^+}h^{-2}\mathcal{B}_h(\varphi_h,\theta)=\dfrac16\int_E\int_\Omega\nabla_E\varphi(e,x)\overline{\nabla_E\theta}(e,x)\,dxd\mu(e).
\label{531}
\end{equation}
\label{lem56}
\end{lem}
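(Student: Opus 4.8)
The plan is to run the proof of Lemma~\ref{lem24} direction-by-direction in $e$ and then integrate the resulting identities against $\mu$. First note that, writing $M_{e,h}f=m_{e,h}f+K_{e,h}f$ with $m_{e,h}=1-K_{e,h}(1)$, one has $1-M_h=\int_E(1-M_{e,h})\,d\mu(e)$, hence $\mathcal{B}_h(u,v)=\int_E\mathcal{B}_{e,h}(u,v)\,d\mu(e)$ with
\begin{equation*}
\mathcal{B}_{e,h}(u,v)=\tfrac14\int_{x\in\Omega,\ x+hte\in\Omega,\ |t|\le1}(u(x)-u(x+hte))\,(\overline v(x)-\overline v(x+hte))\,dt\,dx ,
\end{equation*}
the analogue of \eqref{234}. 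Throughout I will use the standing hypothesis of this section (implicit in \eqref{54} and \eqref{512}) that $\mu$ has bounded support, say $|e|\le L$ for $\mu$-almost every $e\in E$; this is what makes the estimates below uniform in $e$.

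For \eqref{530} I would Taylor-expand the smooth function $\theta$ to second order: $(1-M_{e,h})\theta=\rho_{e,h}+s_{e,h}$ with
\begin{equation*}
\rho_{e,h}(x)=-\tfrac h2\,\partial_e\theta(x)\int_{|t|\le1,\ x+hte\in\Omega}t\,dt ,\qquad \|s_{e,h}\|_{L^\infty}\le Ch^2 ,
\end{equation*}
$C$ independent of $e$. Exactly as in Lemma~\ref{lem24}, $\rho_{e,h}$ is supported in $\{\dist(x,\partial\Omega)<hL\}$ with $\|\rho_{e,h}\|_{L^\infty}\le Ch$, so $\|\rho_{e,h}\|_{L^2}\le Ch^{3/2}$ and $\|s_{e,h}\|_{L^2}\le Ch^2$. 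Since $\|r_h\|_{L^2}=O(h)$,
\begin{equation*}
h^{-2}|\mathcal{B}_{e,h}(r_h,\theta)|=h^{-2}|\<r_h,(1-M_{e,h})\theta\>|\le h^{-2}\|r_h\|_{L^2}\big(\|\rho_{e,h}\|_{L^2}+\|s_{e,h}\|_{L^2}\big)\le Ch^{1/2}
\end{equation*}
uniformly in $e$, and integrating over the probability measure $\mu$ gives \eqref{530}.

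For \eqref{531} I would mimic \eqref{239}--\eqref{242}: write $\theta(x+hte)-\theta(x)=ht\,\psi_e(ht,x)$ with $\psi_e(s,x)=\int_0^1\partial_e\theta(x+sze)\,dz$ (so $\psi_e(0,x)=\partial_e\theta(x)$) and $\varphi_h(x+hte)-\varphi_h(x)=ht\int_0^1\partial_e\varphi_h(x+htze)\,dz$, then change variables to move $\varphi_h$ onto the point $x$ as in \eqref{240}, getting
\begin{equation*}
h^{-2}\mathcal{B}_{e,h}(\varphi_h,\theta)=\tfrac14\int_{x-htze\in\Omega,\ x+ht(1-z)e\in\Omega,\ |t|\le1,\ z\in[0,1]}t^2\,\partial_e\varphi_h(x)\,\overline{\psi_e(ht,x-htze)}\,dt\,dz\,dx .
\end{equation*}
Using $\psi_e(ht,x-htze)=\partial_e\theta(x)+O(h)$ and splitting the domain according to $\dist(x,\partial\Omega)\ge\delta$ or $<\delta$ exactly as in \eqref{241}--\eqref{242} (the two constraints are vacuous on the bulk region once $h<\delta/L$, and the boundary layer is controlled by Cauchy--Schwarz), this yields, uniformly in $e\in E$ and for every $h<\delta/L$,
\begin{equation*}
h^{-2}\mathcal{B}_{e,h}(\varphi_h,\theta)=\tfrac16\int_\Omega\partial_e\varphi_h(x)\,\overline{\partial_e\theta(x)}\,dx+O\!\big(\delta^{1/2}(1+\|\varphi_h\|_{H^1})\big).
\end{equation*}
Integrating over $\mu$ and using that $\|\varphi_h\|_{H^1}$ is bounded, I then get $h^{-2}\mathcal{B}_h(\varphi_h,\theta)=\tfrac16\int_E\int_\Omega\partial_e\varphi_h(x)\,\overline{\partial_e\theta(x)}\,dx\,d\mu(e)+O(\delta^{1/2})$. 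For each fixed $e$, the weak convergence $\varphi_h\rightharpoonup\varphi$ in $H^1$ forces $\int_\Omega\partial_e\varphi_h\,\overline{\partial_e\theta}\to\int_\Omega\partial_e\varphi\,\overline{\partial_e\theta}$, and since the integrand is bounded by $CL^2\sup_h\|\varphi_h\|_{H^1}\in L^1(\mu)$, dominated convergence lets me pass $h\to0^+$ inside $\int_E$; letting finally $\delta\to0^+$ identifies the limit with $\tfrac16\int_E\int_\Omega\partial_e\varphi(x)\,\overline{\partial_e\theta(x)}\,dx\,d\mu(e)$, which is the right-hand side of \eqref{531} since $\partial_e\varphi=\nabla_E\varphi(e,\cdot)$.

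The hard part is this last interchange of $\lim_{h\to0^+}$ and $\int_E\,d\mu(e)$; it is exactly where this lemma differs from Lemma~\ref{lem24}, and it is handled by keeping every error term uniform in $e$ (which uses that $\mu$ has bounded support) together with dominated convergence. When $E=\{e_1,\dots,e_p\}$ is finite the step is vacuous and the argument reduces verbatim to the proof of Lemma~\ref{lem24}; under the weaker hypothesis $\int_E|e|^2\,d\mu(e)<\infty$ the same scheme works after first truncating to $\{|e|\le\delta/h\}$, whose complement contributes $o_h(1)$ by dominated convergence.
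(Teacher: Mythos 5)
Your proposal is correct and follows essentially the same route as the paper, which simply refers back to the proof of Lemma~\ref{lem24}; you run that proof direction-by-direction in $e$ and then integrate against $\mu$. The one genuinely new step you supply — making all error terms uniform in $e$ (using the bounded support of $\mu$, implicit in \eqref{54} and \eqref{512}) so as to justify interchanging $\lim_{h\to0^+}$ with $\int_E d\mu$ — is exactly the detail the paper leaves unstated, and you handle it correctly.
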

\begin{proof}
The proof is the same as that of Lemma \ref{lem24}.
\end{proof}

Total variation estimates for rates of convergence now follow as in \ref{sec3}.

\bibliographystyle{amsplain}
\bibliography{PDall,PDnone}

\end{document}